\numberwithin{equation}{section}
\newtheorem{Theorem}{Theorem}[section]
\newtheorem{Corollary}[Theorem]{Corollary}
\newtheorem{Lemma}[Theorem]{Lemma}
\newtheorem{Proposition}[Theorem]{Proposition}
 { \theoremstyle{definition}
\newtheorem{Definition}[Theorem]{Definition}

\newtheorem{Example}[Theorem]{Example}
\newtheorem{Remark}[Theorem]{Remark} }
\DeclareMathOperator{\Id}{Id}
\DeclareMathOperator{\Span}{Span}
\DeclareMathOperator{\Ann}{Ann}
\DeclareMathOperator{\End}{End}
\DeclareMathOperator{\Hom}{Hom}
\DeclareMathOperator{\Aut}{Aut}
\newcommand{\II}{\mathrm{I\hspace{-.1em}I}}
\begin{document}
\allowdisplaybreaks

\newcommand{\arXivNumber}{2403.15968}

\renewcommand{\PaperNumber}{001}

\FirstPageHeading

\ShortArticleName{Symplectic Differential Reduction Algebras and Generalized Weyl Algebras}

\ArticleName{Symplectic Differential Reduction Algebras\\ and Generalized Weyl Algebras}

\Author{Jonas T.~HARTWIG~$^{\rm a}$ and Dwight Anderson WILLIAMS~{II}~$^{\rm b}$}

\AuthorNameForHeading{J.T.~Hartwig and D.A.~Williams~{II}}

\Address{$^{\rm a)}$~Department of Mathematics, Iowa State University, Ames, IA 50011, USA}
\EmailD{\href{mailto:jth@iastate.edu}{jth@iastate.edu}}
\URLaddressD{\url{http://jthartwig.net}}

\Address{$^{\rm b)}$~Department of Mathematics, Morgan State University, Baltimore, MD 21251, USA}
\EmailD{\href{dwight@mathdwight.com}{dwight@mathdwight.com}}
\URLaddressD{\url{https://mathdwight.com}}

\ArticleDates{Received March 27, 2024, in final form December 23, 2024; Published online January 01, 2025}

\Abstract{Given a map $\Xi\colon U(\mathfrak{g})\rightarrow A$ of associative algebras, with $U(\mathfrak{g})$ the universal enveloping algebra of a (complex) finite-dimensional reductive Lie algebra $\mathfrak{g}$, the restriction functor from $A$-modules to $U(\mathfrak{g})$-modules is intimately tied to the representation theory of an $A$-subquotient known as the \emph{reduction algebra} with respect to $(A,\mathfrak{g},\Xi)$. Herlemont and Ogievetsky described differential reduction algebras for the general linear Lie algebra $\mathfrak{gl}(n)$ as algebras of deformed differential operators. Their map $\Xi$ is a realization of $\mathfrak{gl}(n)$ in the $N$-fold tensor product of the $n$-th Weyl algebra tensored with $U(\mathfrak{gl}(n))$. In this paper, we further the study of differential reduction algebras by finding a presentation in the case when $\mathfrak{g}$ is the symplectic Lie algebra of rank two and $\Xi$ is a canonical realization of $\mathfrak{g}$ inside the second Weyl algebra tensor the universal enveloping algebra of $\mathfrak{g}$, suitably localized. Furthermore, we prove that this differential reduction algebra is a generalized Weyl algebra (GWA), in the sense of Bavula, of a new type we term skew-affine. It is believed that symplectic differential reduction algebras are all skew-affine GWAs; then their irreducible weight modules could be obtained from standard GWA techniques.}

\Keywords{Mickelsson algebras; Zhelobenko algebras; skew affine; quantum deformation; differential operators}

\Classification{16S15; 	16S32; 17B35; 17B37; 81R05}

\section{Introduction}\label{sec:Intro}

\subsection{Background}\label{subsec:Background}
Mickelsson addressed the problem of reducing representations of Lie algebras over a subalgebra with the introduction of step algebras \cite{mickelssonStepAlgebrasSemisimple1973}, which are now known as Mickelsson algebras. These algebras are part of a broader family, collectively called \emph{reduction algebras} \cite{khoroshkinDiagonalReductionAlgebras2010}.
Specific applications and developments relying on reduction algebras are found in the decomposition of tensor product modules (via actions of diagonal reduction algebras) \cite{hartwigDiagonalReductionAlgebra2022c,khoroshkinDiagonalReductionAlgebras2010}, higher-order Fischer decompositions in harmonic analysis \cite{debieHarmonicTransvectorAlgebra2017}, conformal field theory \cite{conleyBoundedLengthRepresentations2001,conleyConformalSymbolsAction2009}, and the construction of wave functions \cite{asherovaProjectionOperatorsSimple1973} in the realm of theoretical particle and nuclear physics, as examples. The last paper formalized extremal projectors, and Zhelobenko \cite{zhelobenkoExtremalProjectorsGeneralized1989} is credited with realizing the importance of extremal projectors in a scheme to determine generators and relations of localized reduction algebras. See \cite{zhelobenkoPrincipalStructuresMethods2006} for an overview of the construction of generalized Mickelsson algebras.

The crucial observation is that reduction algebras associated to a reductive Lie algebra $\mathfrak{g} = \mathfrak{g}_{-} \oplus \mathfrak{h} \oplus \mathfrak{g}_{+}$ generally arise from a triple of data, that is, a Lie algebra homomorphism $\Xi\colon \mathfrak{g} \rightarrow A$ from $\mathfrak{g}$ to an associative algebra $A$ or the resulting algebra map (perhaps also called $\Xi$) from the universal enveloping algebra $U(\mathfrak{g})$ of $\mathfrak{g}$ to $A$. We set $I$ to be the left ideal in $A$ generated by the positive nilpotent subalgebra $\mathfrak{g}_{+} \subset \mathfrak{g}$ via the $\Xi$-induced action of $\mathfrak{g}$ on $A$. Then the reduction algebra $Z = Z(A,\mathfrak{g},\Xi)$, up to a choice of localization, is the subquotient $N/I$, where the normalizer $N$ is the largest subalgebra of $A$ containing $I$ as a two-sided ideal. A common case occurs when $\Xi$ is an embedding of $\mathfrak{g}$ into the universal enveloping algebra $U(\mathfrak{t})$ of a Lie algebra $\mathfrak{t} \supset \mathfrak{g}$ containing $\mathfrak{g}$, and we write simply $Z = Z(\mathfrak{t},\mathfrak{g})$. The same abbreviation of the map occurs in the super analogue or whenever the Lie (super)algebra homomorphism is understood. There is also a $q$-analog of this construction using Drinfeld--Jimbo quantum groups.

We now emphasize two points. The first is that reduction algebras are associative algebras with their own lanes of study. There are a great number of results on reduction algebras with many summarized at the end of Tolstoy's commemorative paper \cite{tolstoy2005fortieth}. Khoroshkin and Ogievetsky furthered results by giving a complete presentation of the diagonal reduction algebra \cite{khoroshkinDiagonalReductionAlgebras2010} of type $A$ of any rank.
With motivation drawn from the second author's work \cite{williamsii2020BasesInfinitedimensionalRepresentations,williamsii2024actionosp12npolytensor} on finding explicit bases of tensor product representations of the orthosymplectic Lie superalgebras~$\mathfrak{osp}(1|2n)$, we gave a complete presentation \cite[Section~3.5]{hartwigDiagonalReductionAlgebra2022c} of the diagonal reduction superalgebra $Z(\mathfrak{osp}(1|2) \times \mathfrak{osp}(1|2), \mathfrak{osp}(1|2))$, the first recorded diagonal reduction algebra associated to a classical \cite{kacClassificationSimpleLie1979} Lie superalgebra. In the sequel \cite{hartwigGhostCenterRepresentations2023}, we classified finite-dimensional and certain infinite-dimensional representations of $Z(\mathfrak{osp}(1|2) \times \mathfrak{osp}(1|2), \mathfrak{osp}(1|2))$.

The second matter is that homomorphisms to the $n$-th Weyl algebra $A_{n}$ (see the use of \emph{canonical realizations} in \cite[Definition 1]{havlicekCanonicalRealizationsLie1975}) yield examples of reduction algebras important to quantum deformations and noncommutative geometry. A particular example is the ring $\mathcal{D}_{\mathbf{h}}(n,N)$ of $\mathbf{h}$-deformed differential operators \cite{herlemontDifferentialCalculusMathbf2017}. It is the reduction algebra defined from the diagonal map sending $\mathfrak{gl}(n)$ to (a localization of) the tensor product $U(\mathfrak{gl}(n)) \otimes A_{nN}$; the first factor relies on the definition of $U(\mathfrak{gl}(n))$, and the second factor uses $N$-copies of an oscillator realization of~$\mathfrak{gl}(n)$.

In this paper, we combine the two points of emphasis above: We initiate the study of the symplectic analogue of $\mathcal{D}_{\mathbf{h}}(2,1)$ by extending a classical oscillator realization of the rank two symplectic Lie algebra $\mathfrak{sp}(4)$ within $A_{2}$ such that the codomain algebra $A$ is the tensor product~${A_2\otimes U(\mathfrak{sp}(4))}$. We localize $A$ to an algebra $\mathscr{A}$ in order to satisfy certain conditions allowing for the use of the $\mathfrak{sp}(4)$ extremal projector.
After sorting out preliminaries in Section \ref{sec:PreliminariesRedAlg}, we use the extremal projector to present a complete set of generators and relations of the \emph{symplectic differential reduction algebra} $D(\mathfrak{sp}(4)) \cong Z(\mathscr{A},\mathfrak{sp}(4))$ in Section \ref{sec:sp4reduction}. Furthermore, we remark that there is a homomorphism from the diagonal reduction algebra of $\mathfrak{sp}(4)$ to $D(\mathfrak{sp}(4))$, and therefore our presentation of the latter should be helpful in finding a presentation of the former.
Lastly, we show in Section \ref{sec:GWA} that after a change of generators, $D(\mathfrak{sp}(4))$ is manifestly a generalized Weyl algebra (GWA) of a new and interesting type. In particular, the irreducible weight modules over $D(\mathfrak{sp}(4))$ can now be classified by understanding orbits and breaks in the maximal spectrum. See \cite{bavula1992generalized,hartwigNoncommutativeFiberProducts2018,mazorchukAssociativeAlgebrasRelated2003}.

\section{Preliminaries}\label{sec:PreliminariesRedAlg}
The base field is $\mathbb{C}$ except when specified otherwise.

\subsection{Oscillator realization and Chevalley generators}\label{subsec:Oscillator-Chevalley}

{\bf Weyl algebras.}
Let $A_{n}$ denote the $n$-th Weyl algebra. Thus, $A_{n}$ is the complex associative algebra with generators~$x_{i}$, $\partial_{i} = \frac{\partial}{\partial x_{i}}$, for $i=1,2,\dots,n$, and relations
\begin{align}\label{eq:weylrels}
	x_ix_j-x_ jx_i =0,\qquad
	\partial_i\partial_j-\partial_j\partial_i =0,\qquad
	\partial_ix_j-x_j\partial_i =\delta_{ij},
\end{align}
where $\delta_{ij}$ is the Kronecker delta. The Weyl algebra $A_{n}$ carries an involutive anti-automor\-phism\footnote{That is, $\vartheta$ is $\mathbb{C}$-linear, $\vartheta(ab)=\vartheta(b)\vartheta(a)$, and $\vartheta^2=\Id_{A_n}$.}~$\vartheta$, called the symplectic Fourier transform, uniquely determined by
\begin{equation}\label{eq:vartheta}
	\vartheta(x_{i})=\partial_{i}.
\end{equation}

{\bf The rank two symplectic Lie algebra.}
Let $\mathfrak{g}=\mathfrak{sp}(4)$ be the $10$-dimensional complex simple Lie algebra of Cartan type $C_2$. It consists of all $4\times 4$ matrices of the form $\left[\begin{smallmatrix}A&B\\C&D\end{smallmatrix}\right]$ where~$B$ and $C$ are symmetric $2\times 2$ matrices, $A$ is any $2\times 2$ matrix, and $D=-A^{\mathsf{T}}$.
The oscillator realization of $\mathfrak{sp}(4)$ is a canonical algebra homomorphism $\omega\colon U(\mathfrak{sp}(4))\to A_2$; see, for example, \cite[Section~4.6]{dixmierEnvelopingAlgebras1996} or \cite{havlicekCanonicalRealizationsLie1976}.
The image of $\mathfrak{sp}(4)$ is $\Span_\mathbb{C}\{uv+vu\mid u,v\in\{x_1,x_2,\partial_1,\partial_2\}\}$.
Since $\mathfrak{sp}(4)$ is simple, we may thus identify $\mathfrak{sp}(4)$ with the $10$-dimensional Lie subalgebra of $A_{2}$ spanned by $\{a_{ij}, b_{ij}, c_{ij} \mid i = 1,2\}$, where
\begin{equation}\label{eq:sp4-oscillator}
	a_{ij} = \frac{1}{2}(x_i\partial_j+\partial_jx_i)=x_i\partial_j+\frac{1}{2}\delta_{ij},\qquad
	b_{ij} = b_{ji} = x_ix_j,\qquad
	c_{ij} = c_{ji} = \partial_i\partial_j.
\end{equation}

Note that $\vartheta$ from \eqref{eq:vartheta} preserves the subspace $\mathfrak{sp}(4)\subset A_2$. We identify $\vartheta|_{\mathfrak{sp}(4)}$ with the involutive Lie algebra anti-automorphism $\tau$ given by
$\tau(a_{ij})=a_{ji}$, $ \tau(b_{ij})=c_{ij}$.

After this identification, the subspace $\mathfrak{h}= \mathbb{C} a_{11} \oplus \mathbb{C} a_{22}$ is a Cartan subalgebra of $\mathfrak{sp}(4)$. We use standard notation for the $8$ roots of $\mathfrak{sp}(4)$, which are
$\pm 2\epsilon_{1}$, $\thinspace \pm 2\epsilon_{2}$, $\thinspace \pm(\epsilon_{1}-\epsilon_{2})$, $\thinspace \pm(\epsilon_{1}+\epsilon_{2})$,
denoting the simple roots by $\alpha = \epsilon_1-\epsilon_2$, $\beta = 2\epsilon_2$. The set of positive roots is $\Phi_+=\{\beta, \beta+\alpha, \beta+2\alpha, \alpha\}$.
There are exactly two convex orderings \cite{itoClassificationConvexOrders2001} of the positive roots
\[
	\beta<\beta+\alpha<\beta+2\alpha<\alpha,\qquad
	\alpha<\beta+2\alpha<\beta+\alpha<\beta.
\]
The corresponding root system is shown in Figure \ref{fig:C2}.

\begin{figure}[ht]
	\centering
	\includegraphics{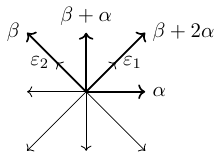}
	\caption{Root system of type $C_2$. Thick lines indicate a choice of positive roots.}
\label{fig:C2}
\end{figure}

Fix positive root vectors, with
${\rm i}=\sqrt{-1}$:
\begin{align*}
	e_{\alpha} = a_{12}, \qquad e_{\beta} = \frac{{\rm i}}{2}b_{22},\qquad
	e_{\beta+\alpha} =[e_{\alpha},e_\beta],\qquad e_{\beta+2\alpha}=\frac{1}{2}[e_\alpha,e_{\beta+\alpha}].
\end{align*}
These provide $\mathfrak{sl}(2)$-triples $(e_{\gamma},f_{\gamma},h'_{\gamma})$, where $f_{\gamma}=\tau(e_{\gamma})$ and $h'_{\gamma}=[e_{\gamma},f_{\gamma}]$:
\begin{gather*}
		\alpha\colon\ (a_{12},\thinspace a_{21},\thinspace a_{11} - a_{22}) = (x_{1}\partial_{2},\thinspace x_{2}\partial_{1},\thinspace x_{1}\partial_{1}-x_{2}\partial_{2}),\\
		\beta\colon\ \left(\frac{\rm i}{2}b_{22},\thinspace \frac{\rm i}{2}c_{22},\thinspace a_{22}\right) = \left(\frac{\rm i}{2}x^{2}_{2}x_{2},\thinspace \frac{\rm i}{2}\partial^{2}_{2},\thinspace x_{2}\partial_{2} + \frac{1}{2}\right),\\
		\beta + \alpha\colon\ ({\rm i} b_{12},\thinspace {\rm i} c_{12},\thinspace a_{11} + a_{22}) = ({\rm i}x_{1}x_{2},\thinspace {\rm i}\partial_{1}\partial_{2},\thinspace x_{1}\partial_{1} + x_{2}\partial_{2} + 1),\\
		\beta + 2\alpha\colon\ \left(\frac{\rm i}{2}b_{11},\thinspace \frac{\rm i}{2}c_{11},\thinspace a_{11}\right) = \left(\frac{\rm i}{2}x_{1}x_{1},\thinspace \frac{\rm i}{2}\partial_{1}\partial_{1},\thinspace x_{1}\partial_{1} + \frac{1}{2}\right).
\end{gather*}
We reserve $h_\gamma$ for the following integer-shifted coroots,
which will simplify computations
\[
	h_\alpha = h_\alpha',\qquad h_{\beta+2\alpha}=h_{\beta+2\alpha}'+1,\qquad h_{\beta+\alpha}=h_{\beta+\alpha}'+2,\qquad h_\beta=h_\beta',
\]
where $h'_{\beta+\alpha} = h_{\alpha} + 2h_{\beta}$ and $h'_{\beta+2\alpha} = h_{\alpha} + h_{\beta}$.
The simple coroots satisfy $\alpha(h_{\beta})=-1$, $\beta(h_{\alpha})=-2$.
Also, note that any scalar shift of $h'_{\alpha+\beta}$ yields the total degree derivation on $A_{2}$ in the adjoint representation:
$[x_{1}\partial_{1} + x_{2}\partial_{2} + 1, x_i]=x_i$ and $[x_{1}\partial_{1} + x_{2}\partial_{2} + 1, \partial_i]=-\partial_i$.

{\bf The map $\boldsymbol{\zeta}$.}
Composing the usual comultiplication in $U(\mathfrak{sp}(4))$ with the oscillator representation in the first tensor leg, we obtain an algebra homomorphism
\begin{equation}\label{eq:Homomorphism-sp4toA}
	\zeta\colon\ U(\mathfrak{sp}(4)) \rightarrow A_{2}\otimes U(\mathfrak{sp}(4)),\qquad v\mapsto \omega(v)\otimes 1+1\otimes v.
\end{equation}

We denote the images of the $\mathfrak{sl}(2)$-triples under $\zeta$ by $(E_\gamma,F_\gamma,H'_\gamma)$.
Similarly, put
\[H_\alpha = H_\alpha',\qquad H_{\beta+2\alpha}=H_{\beta+2\alpha}'+1,\qquad H_{\beta+\alpha}=H_{\beta+\alpha}'+2,\qquad H_\beta=H_\beta'.\]

\begin{Remark}
	The tensoring of $A_{2}$ with $U(\mathfrak{sp}(4))$ is useful for the study of tensor product representations in which one tensor factor is an oscillator representation of $\mathfrak{sp}(4)$; however, the authors stress that there is no requirement in general for the codomain algebra to contain~$U(\mathfrak{sp}(4))$ in order to study or apply reduction algebras. We observe that \eqref{eq:sp4-oscillator} does not yield an injective algebra homomorphism from $U(\mathfrak{sp}(4))$ to $A_{2}$ as, amongst many relations, $b_{12}c_{12} - a_{12}a_{21} + a_{11} - \frac{1}{2} = 0$. Yet one can study the associated reduction algebra, a topic we review next.
\end{Remark}

\subsection{Reduction algebras}\label{subsec:ReductionAlgebras}
There is extensive literature \cite{ashtonRmatrixMickelssonAlgebras2015,hartwigGhostCenterRepresentations2023,khoroshkinMickelssonAlgebrasZhelobenko2008,herlemontRingsHdeformedDifferential2017,vandenhomberghNoteMickelssonStep1975,zhelobenkoExtremalProjectorsGeneralized1989,zhelobenkoHypersymmetriesExtremalEquations1997} on reduction algebras and their applications.
We provide a brief overview of constructions/perspectives and related notions for when $\mathfrak{g}$ is a~complex finite-dimensional reductive Lie algebra $\mathfrak{g}=\mathfrak{g}_{-}\oplus\mathfrak{h}\oplus\mathfrak{g}_{+}$ with fixed (maximal) nilpotent subalgebras $\mathfrak{g}_{\pm}$ and Cartan subalgebra $\mathfrak{h}$. See \cite[Section~2]{hartwigDiagonalReductionAlgebra2022c} for details in the super case.

{\bf Subquotient algebra.} Let $A$ be an associative algebra and $\Xi\colon U(\mathfrak{g})\to A$ a map of associative algebras. In particular, the map $\Xi$ makes $A$ into a $U(\mathfrak{g})$-bimodule via multiplication. Let~${I=A\cdot \mathfrak{g}_+}$ (written as $A\mathfrak{g}_+$ from hereon) be the left ideal in $A$ generated by the image of~$\mathfrak{g}_+$ under $\Xi$ composed with the natural inclusion. The reduction algebra $Z = Z(A,\mathfrak{g},\Xi)$ equals~${N_{A}(I)/I}$, where $N_{A}(I)$ is the normalizer of $I$ in $A$. To clarify, the \emph{normalizer} $N_{A}(I)$ of the left ideal $I$ in $A$ is the maximal subalgebra of $A$ with respect to containment of $I$ as a~two-sided ideal. Hence as an $A$-subquotient the reduction algebra $Z$ inherits its product from $A$. Note that normalizers may be termed \emph{idealizers} in general ring theory \cite[specifically Section~2 for historical context]{jacobsonNonCommutativePolynomialsCyclic1934}.

\begin{Remark}
	As mentioned in the introduction, the necessary data is encoded in a Lie algebra map $\Xi\colon \mathfrak{g} \rightarrow A$. Reduction algebras may be expressed as $Z(\mathfrak{t},\mathfrak{g})$ (respectively, $Z(A, \mathfrak{g})$) if the map $\mathfrak{g} \rightarrow U(\mathfrak{t})$ (respectively, $\mathfrak{g} \rightarrow A$) is given in context.
\end{Remark}

{\bf Opposite endomorphism algebra.} As $(\End_{A}(A))^{\mathrm{op}}$ is isomorphic to $A$, one might hope for a natural realization of the $A$-sub\-quo\-tient $Z$ as endomorphisms of an $A$-module.
Indeed, since $n \in N_{A}(I)$ implies $In \subset I$, right multiplication by $N_{A}(I)$ defines a right action on $A/I$ such that $I \subset \Ann_{N_{A}(I)}(A/I)$. It follows that $A/I$ is a right $Z$-module, in fact, an $(A,Z)$-bimodule. Then $\End_{A}(A/I)$ is a left $Z$-module isomorphic to the left $Z$-module $Z$, particularly since endomorphisms of $A/I$ preserve $Z$ and $1+I \in Z$ generates $A/I$ as an $A$-module. Now consider maps $1+I \mapsto n+I$, $n \in N_{A}(I)$, which are precisely the elements of $\End_{A}(A/I)$ by the previous statement. Then opposite composition shows that the reduction algebra $Z$ is isomorphic to $Z_{\text{maps}} = (\End_{A}(A/I))^{\mathrm{op}}$, the opposite algebra of left $A$-module endomorphisms on $A/I$.

{\bf Invariants.}
For a left $A$-module $V$, the subspace of $\mathfrak{g}_{+}$-invariants is $V^{+} = \{{v \in V \mid ev = 0} \allowbreak \text{for all } e\in \mathfrak{g}_{+} \}$.
Then since $N_{A}(I)V^{+} \subset V^{+}$ and $IV^{+} = \{0\}$, there is a natural action on~$V^{+}$ by~$Z$ on the left. Taking the case where $V = A/I$, we have that $Z = (A/I)^{+}$ since ${\mathfrak{g}_{+}(a + I) = \{0 + I\}}$ if and only if $Ia \subset I$, the defining condition for $a \in N_{A}(I)$. We say then that $A/I$ is the universal $\mathfrak{g}_{+}$-highest-weight $A$-module in the sense that given a pair $(W,w)$ of a~left $A$-module $W$ containing a (singular) vector $w$ (such as any $\mathfrak{g}_{+}$-highest-weight vector), there exists a unique $A$-module homomorphism $\Psi_{w}\colon A/I \rightarrow W$ sending $1+I$ to $w$. Evaluation at~${1+I \in (A/I)^{+}}$ determines $\Psi_{w}$ completely; thus, the left $Z$-module $\Hom_{A}(A/I,W)$ and $W^{+}$ are isomorphic
as left $Z$-modules, as in the special case in the previous paragraph.

\begin{Remark}
	Recasting $W^{+}$ as solutions to a system of equations puts $Z$ as a symmetry algebra of the solution space. In particular, reduction algebras are useful in determining solutions to so-called extremal equations \cite{zhelobenkoHypersymmetriesExtremalEquations1997}.
\end{Remark}

{\bf Coinvariants.}
There is a dual construction of reduction algebras based on $\mathfrak{g}_{-}$-coinvariants, the elements of $V_{-} = V/\mathfrak{g}_{-}V$ for a left $A$-module $V$ (taking into account the action of $\mathfrak{g}_{-}$ induced by $\Xi$).
We begin with the quotient space $(A/I)_{-} = (A/I)/\mathfrak{g}_{-}(A/I)$ that comprises the $\mathfrak{g}_{-}$-coinvariants of $A/I$. Note that $A/I$ being a right $Z$-module and $A$ being a $U(\mathfrak{g})$-bimodule implies $(A/I)_{-}$ is a right $Z$-module. By multiple applications of the fundamental theorem of homomorphisms and setting the right ideal $\mathfrak{g}_{-}A$ equal to $J$, we identify $(A/I)_{-}$ with the double coset space $J\reflectbox{$/$}A/I = A/\Romanbar{II}$, where $\Romanbar{II}$ is the sum of abelian groups $I + J$. Explicitly, the mapping sending $(a + I) + \mathfrak{g}_{-}(A/I) \in (A/I)_{-}$ to $a + (I + J) \in a + \Romanbar{II}$ is an isomorphism of vector spaces. Then we have the composition $\nu = \pi\circ\iota\colon Z \rightarrow A/\Romanbar{II}$ of right $Z$-modules from the natural inclusion~${\iota\colon Z\rightarrow A/I}$ and the natural surjection $\pi\colon A/I \rightarrow A/\Romanbar{II}$.

{\bf Extremal projector.} 
The use of $\Romanbar{II}$ for the sum $I+J$ of abelian groups is suggestive as the resulting quotient $A/\Romanbar{II}$ of $\mathfrak{g}_{-}$-coinvariants has the structure of an algebra isomorphic to $Z$ via the extremal projector.
Extremal projectors were introduced in \cite{lowdinAngularMomentumWavefunctions1964} and subsequently studied by Asherova, Smirnov, Tolstoy \cite{asherovaProjectionOperatorsSimple1973}.
Each finite-dimensional reductive Lie algebra $\mathfrak{g}$ over the complex numbers admits a unique extremal projector $P_{A,\mathfrak{g},\Xi} = P_{\mathfrak{g}} = P_{\mathfrak{g}_{+}} = P$ (varying only in expression) that projects the universal $\mathfrak{g}_{+}$-highest-weight $A$-module $A/I$ (more generally, an $A$-module $V$) onto the space of invariants $Z$ (respectively, $V^{+}$) along $J$ (respectively, $\mathfrak{g}_{-}V)$ when the input data $(A,\mathfrak{g},\Xi)$ satisfies sufficient conditions:
\begin{enumerate}[label=(C\arabic*)]\itemsep=0pt
	\item With the adjoint action, the Lie algebra $\mathfrak{g}$ acts reductively on $A$ via $\Xi$. \label{EPconditons:1:adjoint}
	\item The left action of $\mathfrak{g}_{+}$ on $A$ (or on $V$) is locally finite. \label{EPconditons:2:locally-finite}
	\item The image $\Xi(h-n)$ is invertible in $A$ for any coroot $h$, integer $n$ (\emph{coroot condition}). \label{EPconditons:3:coroot-condition}
\end{enumerate}
Using the categorical language of \cite{andreotti1957generalites}, we have $\nu^{-1} = (\pi \circ \iota)^{-1}$ is the $(A,Z)$-bimodule map coming by considering first the astriction of $P$ to $A/\Romanbar{II}$ (descending $P$ to $A/\Romanbar{II}$) before requiring the coastriction of the resulting map to its image, which is equal to $\operatorname{Im}(P) = Z$ (shrinking the codomain to $Z$): \smash{$A/I \xrightarrow{\pi} A/\Romanbar{II} \xrightarrow{\nu^{-1}} Z \xrightarrow{\iota} A/I$}. Then, as noted by Khoroshkin and Ogievetsky, we get the \emph{double coset algebra}:

\begin{Proposition}[{\cite[equation (3.7)]{khoroshkinMickelssonAlgebrasZhelobenko2008}}]\label{thm:double-coset-algebra}
	Under conditions {\rm\ref{EPconditons:1:adjoint}}, {\rm\ref{EPconditons:2:locally-finite}}, and {\rm\ref{EPconditons:3:coroot-condition}},
	there is an associative binary product $\diamond$ $($``diamond product''$)$ on $A/\II$, defined by
	\[\bar{x} \diamond \bar{y} = \pi(xP(y+I)), \qquad \bar{x} = x+ \II,\thinspace \bar{y} = y + \II\thinspace \in A/\emph{\Romanbar{II}},\]
	which yields $\nu^{-1}$ as an isomorphism of associative algebras: $A/\emph{\Romanbar{II}} \cong Z$.
	
	Note: We use $xPy + \II$ to mean the right-hand side of the diamond product $\bar{x} \diamond \bar{y}$ in $A/\II$.
\end{Proposition}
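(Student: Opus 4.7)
The plan is to define the diamond product by transport of structure: since $\nu\colon Z\to A/\II$ has already been established as a linear isomorphism via the extremal projector $P$, setting $\bar x \diamond \bar y := \nu\bigl(\nu^{-1}(\bar x)\cdot\nu^{-1}(\bar y)\bigr)$ automatically yields an associative product on $A/\II$ and makes $\nu^{-1}\colon A/\II \to Z$ an algebra isomorphism. The substantive work is then purely to match this intrinsic product with the explicit formula $\pi(xP(y+I))$.

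To carry out the matching, I would recall from the preceding discussion that under conditions \ref{EPconditons:1:adjoint}--\ref{EPconditons:3:coroot-condition} the projector $P$ satisfies $\nu^{-1}(\bar y) = P(y+I)$ for any lift $y\in A$ of $\bar y$. Fix a representative $n_x \in N$ of $\nu^{-1}(\bar x) = P(x+I)$. Because the product on $Z$, viewed inside $A/I$, is induced by the left $A$-action restricted from $N$, one has $\iota\bigl(\nu^{-1}(\bar x)\cdot\nu^{-1}(\bar y)\bigr) = n_x\cdot P(y+I)$ in $A/I$; applying $\pi$ yields $\bar x\diamond\bar y = \pi(n_x P(y+I))$.

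The main obstacle, and really the only step with content, is justifying the replacement of $n_x$ by the original $x$ inside this expression. Since $n_x - x \in \II = I + J$, it suffices to verify the stability $\II\cdot z_y \subseteq \II$ for any lift $z_y \in A$ of $P(y+I)$. The containment $J\cdot z_y \subseteq J$ is immediate from the fact that $J=\mathfrak{g}_-A$ is a right ideal. For $I\cdot z_y$, I would invoke that $z_y + I\in (A/I)^+$, which forces $\mathfrak{g}_+ z_y \subseteq I$; combined with $I = A\mathfrak{g}_+$ this gives $I\cdot z_y = A\mathfrak{g}_+ z_y \subseteq AI \subseteq I$. This compensation is required precisely because $I$ is only a left ideal and not two-sided; it is the one non-formal move in the argument. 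Once this is in hand, $\pi(xP(y+I)) = \pi(n_xP(y+I)) = \bar x\diamond\bar y$, and associativity together with the algebra isomorphism $\nu^{-1}$ follow from the transport-of-structure definition. Aside from this bookkeeping, the only genuine input is the existence and projection property of $P$, which is assumed by hypothesis.
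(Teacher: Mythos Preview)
Your proposal is correct and follows essentially the same transport-of-structure approach as the paper's own argument (given in the Remark immediately following the statement). The only difference is cosmetic: the paper packages the replacement of $n_x$ by $x$ via the already-established right $Z$-module structure on $A/\II$ and the fact that $\pi$ is a right $Z$-module map, whereas you verify directly that $\II\cdot z_y\subseteq\II$ for $z_y\in N_A(I)$---which is precisely the concrete content underlying that module structure.
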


\begin{Remark}[on the diamond product]
	For $\nu^{-1}$ to be an algebra isomorphism, we require a~product $\diamond$ such that $\nu^{-1}(\bar{x} \diamond \bar{y}) = \nu^{-1}(\bar{x})\nu^{-1}(\bar{y}) \in Z$. Then necessarily ${\bar{x} \diamond \bar{y} = \bar{x}\nu^{-1}(\bar{y}) \in A/\Romanbar{II}}$, where the right-hand side is interpreted through the right $Z$-module structure on $A/\Romanbar{II}$.
	Now considering the left $A$-module structure on $A/I$ and the right $Z$-module map $\pi$, we have
\[
\pi(xP(y+I)) = \pi(x(1+I)P(y+I)) = \pi((x+I)P(y+I)) = \bar{x}\nu^{-1}(\bar{y}).
\]
\end{Remark}

{\bf Ring of dynamical scalars.}
To ensure the coroot condition is satisfied, we follow Zhe\-lobenko \cite{zhelobenkoHypersymmetriesExtremalEquations1997} to enact either of the equivalent steps of localizing the associative algebra $A$ as input data of a reduction algebra or localizing the resulting reduction algebra, whose non-localized version we refer to as the \emph{Mickelsson algebra}. The Mickelsson algebra may be regarded as a subalgebra of the corresponding (localized) reduction algebra. We also cite Zhelobenko \cite[equation (4.16)]{zhelobenko1985gelfand} for the method of obtaining elements of the Mickelsson algebra from the localized reduction algebra by clearing denominators---the elements obtained by this method are called \emph{normalized elements}. We will use normalized generators of $Z$ in specific calculations.

In localizing $A$, we extend the complex scalars to \emph{dynamical scalars}, which are (non-evaluated) rational functions in $\mathfrak{h}$. Precisely, consider the denominator set $S$ generated by $\{h + n \mid h \allowbreak\text{a coroot}, n \in \mathbb{Z}\}$, and write the localized $A$ as $\mathscr{A} = S^{-1}U(\mathfrak{h}) \otimes_{U(\mathfrak{h})} A$. Then the reduction algebra associated to the data $(\mathscr{A},\mathfrak{g},\Xi)$ (where we use the same name for enlarging the codomain of $\Xi$ from $A$ to $\mathscr{A}$) is identified with the localization of $Z(A, \mathfrak{g}, \Xi)$, namely, $S^{-1}U(\mathfrak{h}) \otimes_{U(\mathfrak{h})} Z$.

In general, computing the normalizer of a left ideal is unwieldy, and the Mickelsson algebra is not a finitely generated $\mathbb{C}$-algebra. Let $R$ be the ring of dynamical scalars $S^{-1}U(\mathfrak{h})$. The extension of the base ring to $R$ yields a finitely-generated $R$-ring \cite[Definition 2.1]{bohmHopfAlgebroids2009} $Z(\mathscr{A}, \mathfrak{g}, \Xi)$.

The thesis \cite{herlemontDifferentialCalculusMathbf2017} of Herlemont (supervised by Ogievetsky) expands on the connections between extremal projectors and reduction algebras. A more geometric treatment is found in \cite{sevostyanovGeometricMeaningZhelobenko2013}. We proceed with the details for the construction of the differential reduction algebra of $\mathfrak{sp}(4)$.

\section[Constructing a differential reduction algebra of sp(4)]{Constructing a differential reduction algebra of $\boldsymbol{\mathfrak{sp}(4)}$}\label{sec:sp4reduction}
From hereon, set $A = A_{2} \otimes U(\mathfrak{sp}(4))$ and recall the algebra map $\zeta$ from \eqref{eq:Homomorphism-sp4toA}. In this section, we write the extremal projector for $\mathfrak{sp}(4)$ in order to provide a finite presentation of the differential reduction algebra $D(\mathfrak{sp}(4))$ defined within.

\subsection[The differential reduction algebra of sp(4)]{The differential reduction algebra of $\boldsymbol{\mathfrak{sp}(4)}$}\label{subsec:DRAsp4}

Set the ring $R$ of dynamical scalars to be
\begin{equation}\label{eq:R-ring-of-dynamical-scalars}
	R=S^{-1}\zeta(U(\mathfrak{h})),
\end{equation}
where $S\subset A$ is the multiplicative submonoid generated by
$\{\zeta(h_\gamma)+n\mid \gamma\in\Phi_+, n \in \mathbb{Z}\}$. The set $S$ is an Ore denominator set in $A$, so we may define $\mathscr{A}$ as the localization
$\mathscr{A} = S^{-1}A$.
Composing the canonical inclusion $A\to\mathscr{A}$ with $\zeta$, we regard $\zeta$ as an algebra map $\zeta\colon U(\mathfrak{sp}(4))\to\mathscr{A}$.
Then~$(\mathscr{A},\mathfrak{sp}(4),\zeta)$ satisfies the conditions \ref{EPconditons:1:adjoint}, \ref{EPconditons:2:locally-finite}, and \ref{EPconditons:3:coroot-condition}.

Let
$I_+ = \mathscr{A}E_\alpha+\mathscr{A}E_\beta$
be the left ideal of $\mathscr{A}$ generated by the image of the positive nilpotent part of $\mathfrak{sp}(4)$ and $I_-=F_\alpha\mathscr{A}+F_\beta\mathscr{A}$ the right ideal generated by the image of the negative nilpotent part of $\mathfrak{sp}(4)$. Then as in Section \ref{subsec:ReductionAlgebras}, the space of double cosets with respect to the sum $\II=I_-+I_+$ in $\mathscr{A}$ is
$D(\mathfrak{sp}(4))=\mathscr{A}/\II$.
Moreover, $D(\mathfrak{sp}(4))$ has the structure of an algebra under the diamond product. Following \cite[Theorem 2]{zhelobenkoExtremalProjectorsGeneralized1989}, the extremal projector for $\mathfrak{sp}(4)$ is
\[
	P=P_\beta P_{\beta+\alpha}P_{\beta+2\alpha}P_\alpha = P_\alpha P_{\beta+2\alpha}P_{\beta+\alpha}P_{\beta},
\]
where, using the shifted coroots $H_\gamma$ for $\gamma \in \Phi_{+}$,
\[
	P_\gamma = 1-\frac{1}{H_\gamma+2}F_\gamma E_\gamma + \cdots.
\]
And
$\bar a\diamond \bar b = aPb + \II$ for all $\bar a=a+\II$, $ \bar b=b+\II$.

We call $D(\mathfrak{sp}(4))$ the \emph{differential reduction algebra of $\mathfrak{sp}(4)$}, and $D(\mathfrak{sp}(4))$ is an associative algebra isomorphic to the
reduction algebra $Z(\mathscr{A},\mathfrak{sp}(4))=N_{\mathscr{A}}(I_+)/I_+$.
We introduce the following Weyl-algebra like elements.
For $a\in\{\partial_1,\partial_2,x_1,x_2\}$, we put
\[
	\tilde a=a\otimes 1\in \mathscr{A}, \qquad \bar a=\tilde a+ \II \in D(\mathfrak{sp}(4)).
\]

\begin{Lemma}\label{lem:Dsp4-anti-automorphism}
	The algebra $D(\mathfrak{sp}(4))$ carries an involutive anti-automorphism $\Theta$ determined by
	\[
		\Theta(\bar x_i) = \bar \partial_i,\qquad i=1,2.
	\]
\end{Lemma}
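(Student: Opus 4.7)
The strategy is to lift the claimed anti-automorphism from the ambient algebra. Define $\hat\Theta := \vartheta \otimes \tau$ as a $\mathbb{C}$-linear map on $A = A_2 \otimes U(\mathfrak{sp}(4))$, where $\vartheta$ is the symplectic Fourier transform from \eqref{eq:vartheta} and $\tau$ denotes also the (unique) anti-automorphism of $U(\mathfrak{sp}(4))$ extending the Lie algebra anti-automorphism with $\tau(a_{ij})=a_{ji}$, $\tau(b_{ij})=c_{ij}$ recorded after \eqref{eq:sp4-oscillator}. A direct check shows $\vartheta \otimes \tau$ is an involutive anti-automorphism of the tensor product. Because the oscillator realization places $\mathfrak{sp}(4)$ inside $A_2$ in such a way that $\vartheta|_{\mathfrak{sp}(4)}=\tau$, one has the intertwining identity $\hat\Theta \circ \zeta = \zeta \circ \tau$. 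In particular, $\hat\Theta$ swaps $E_\gamma \leftrightarrow F_\gamma$ for every positive root $\gamma$, and $\hat\Theta$ fixes $\zeta(U(\mathfrak{h}))$ pointwise.

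Next, descend $\hat\Theta$ to $D(\mathfrak{sp}(4))$ at the level of vector spaces. Because $\hat\Theta$ fixes $\zeta(U(\mathfrak{h}))$, it stabilizes the Ore set $S$ of \eqref{eq:R-ring-of-dynamical-scalars} and thus extends uniquely to an involutive anti-automorphism of $\mathscr{A}$. Anti-multiplicativity together with $\hat\Theta(E_\gamma)=F_\gamma$ forces $\hat\Theta(I_+)=I_-$ and $\hat\Theta(I_-)=I_+$, hence $\hat\Theta(\II)=\II$, and $\hat\Theta$ descends to a well-defined involutive $\mathbb{C}$-linear map $\Theta$ on $D(\mathfrak{sp}(4))=\mathscr{A}/\II$. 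Since $\vartheta(x_i)=\partial_i$, the required formula $\Theta(\bar x_i)=\bar\partial_i$ is immediate.

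The main task is to verify that $\Theta$ reverses the diamond product. By Proposition~\ref{thm:double-coset-algebra},
\[ \Theta(\bar x \diamond \bar y) = \hat\Theta(xPy) + \II = \hat\Theta(y)\,\hat\Theta(P)\,\hat\Theta(x) + \II, \qquad \Theta(\bar y) \diamond \Theta(\bar x) = \hat\Theta(y)\,P\,\hat\Theta(x) + \II, \]
so it suffices to prove $\hat\Theta(P)=P$, interpreted term by term in the usual completion of $\mathscr{A}$ that houses the extremal projector. Each factor $P_\gamma$ is a power series in $F_\gamma E_\gamma$ whose coefficients are rational functions of $H_\gamma \in \zeta(U(\mathfrak{h}))$; these coefficients are fixed by $\hat\Theta$, while $\hat\Theta(F_\gamma E_\gamma)=\hat\Theta(E_\gamma)\hat\Theta(F_\gamma)=F_\gamma E_\gamma$, so each $P_\gamma$ is fixed by $\hat\Theta$. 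Applying the anti-multiplicative $\hat\Theta$ to the first factorization of $P$ reverses the order of its factors, which is precisely the second factorization
\[ P = P_\beta P_{\beta+\alpha} P_{\beta+2\alpha} P_\alpha = P_\alpha P_{\beta+2\alpha} P_{\beta+\alpha} P_\beta \]
recorded just before the definition of $D(\mathfrak{sp}(4))$. Hence $\hat\Theta(P)=P$, and $\Theta$ is an involutive anti-automorphism with the prescribed action on $\bar x_i$. The only real subtlety is that $P$ lives in a completion rather than in $\mathscr{A}$; but $\hat\Theta$ preserves the adjoint $\mathfrak{h}$-grading that defines this completion, so the coefficient-wise argument is rigorous.
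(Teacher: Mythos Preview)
Your proof is correct and follows essentially the same route as the paper: lift to $\hat\Theta=\vartheta\otimes\tau$ on $A$, extend to $\mathscr{A}$, check that $\II$ is preserved and $P$ is fixed, then descend. The paper merely asserts that $\vartheta\otimes\tau$ fixes $P$; you supply the mechanism (each $P_\gamma$ is fixed and $\hat\Theta$ interchanges the two convex-order factorizations), which is exactly the intended justification.

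One small slip worth noting: the terms of $P_\gamma$ have the form $c_k(H_\gamma)\,F_\gamma^{\,k}E_\gamma^{\,k}$, not powers of the single product $F_\gamma E_\gamma$. Your conclusion is unaffected, since $\hat\Theta\bigl(F_\gamma^{\,k}E_\gamma^{\,k}\bigr)=F_\gamma^{\,k}E_\gamma^{\,k}$ for the same reason, and $F_\gamma^{\,k}E_\gamma^{\,k}$ has $\mathfrak{h}$-weight zero so it commutes with the coefficient $c_k(H_\gamma)$.
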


\begin{proof}
	The Chevalley involution $\tau$ of $\mathfrak{sp}(4)$ extends to an involutive algebra anti-automorphism of $U(\mathfrak{sp}(4))$, also denoted $\tau$. Recalling $\vartheta$ from \eqref{eq:vartheta}, and $\zeta$ from \eqref{eq:Homomorphism-sp4toA},
	we have
	\[
		\zeta(\tau(a)) = (\vartheta\otimes \tau)\circ \zeta(a),\qquad a\in\mathfrak{sp}(4).
	\]
	The anti-automorphism $\vartheta\otimes\tau$ of $A_2\otimes U(\mathfrak{sp}(4))$ uniquely extends to the localization $\mathscr{A}$ and preserves the double coset $\II$ and fixes the extremal projector $P$, hence induces an anti-automor\-phism~$\Theta$ of the required form.
\end{proof}

Lemma \ref{lem:Dsp4-anti-automorphism} allows us to cut computations almost in half.

\subsection[A computational lemma for D(sp(4))]{A computational lemma for $\boldsymbol{D(\mathfrak{sp}(4))}$}\label{subsec:ComputationalLemma}
We provide some computations that will be needed in proof of Theorem \ref{thm:presentation}.
In $\mathscr{A}$, we have for any positive root $\gamma$,
\begin{gather}\label{eq:P_gamma}
	aP_\gamma b \equiv
	ab+aF_\gamma\frac{-1}{H_\gamma}E_{\gamma}b + \cdots
	\equiv ab+[a,F_\gamma]\frac{-1}{H_\gamma}[E_{\gamma},b] + \cdots\quad \mod{\II},
\end{gather}
where $\cdots$ indicate higher-order terms from the extremal projector. However, we will only need the constant and linear terms in the calculations of this paper.

\begin{Lemma} \label{lem:aPb-congruences}
	We have the following congruences modulo $\II$:
	\begin{gather}
		y P\widetilde{x}_{1} \equiv y\widetilde{x}_{1}\qquad \text{ for all}\quad y \in \mathscr{A}\label{cong:p1Px1},\\
		\widetilde{\partial}_2P\widetilde{x}_2 \equiv \widetilde{\partial}_2\widetilde{x}_2+\frac{-1}{H_\alpha+1}\widetilde{\partial}_1\widetilde{x}_1\label{cong:p2Px2},\\
		\widetilde{x}_2P\widetilde{\partial}_2 \equiv -1 + \frac{H_\beta}{(H_\beta+1)(H_{\beta+\alpha}+1)}\widetilde{\partial}_1\widetilde{x}_1+\left(1+\frac{1}{H_\beta+1}\right)\widetilde{\partial}_2\widetilde{x}_2,\nonumber\\
		\widetilde{x}_1P\widetilde{\partial}_1 \equiv -1+\frac{1}{H_\alpha+1}+\left(1+\frac{H_\alpha H_{\beta+\alpha}+H_{\beta+2\alpha}+1}{(H_\alpha+1)(H_{\beta+\alpha}+1)(H_{\beta+2\alpha}+1)}\right)\widetilde{\partial}_1\widetilde{x}_1\nonumber\\
		 \phantom{\widetilde{x}_1P\widetilde{\partial}_1 \equiv }{} + \frac{H_\alpha-H_{\beta+\alpha}-2}{(H_\alpha+1)(H_{\beta+\alpha}+1)}\widetilde{\partial}_2\widetilde{x}_2. \label{cong:x1Pp1}
	\end{gather}
\end{Lemma}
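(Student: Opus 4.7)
The plan is to expand the extremal projector as $P = P_\beta P_{\beta+\alpha} P_{\beta+2\alpha} P_\alpha$ and compute each $aPb$ modulo $\II$ by applying one factor $P_\gamma$ at a time, using \eqref{eq:P_gamma} for the per-factor simplification. First I would tabulate $[E_\gamma,b]$ and $[a,F_\gamma]$ for each $a,b\in\{\widetilde{x}_i,\widetilde{\partial}_i\}$; these are short Weyl-algebra computations since only the first tensor factor of $\zeta$ acts nontrivially on these elements. A key preliminary observation is that in every case the result $[E_\gamma,b]$ is (a scalar multiple of) some $\widetilde{x}_j$ or $\widetilde{\partial}_j$, and $[E_\gamma,[E_\gamma,b]]=0$. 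Hence the quadratic and higher tails $F_\gamma^k E_\gamma^k$ ($k\ge 2$) of each single projector vanish modulo $I_+$ when applied to our linear elements, so only the first-order contribution of each projector needs to be tracked.

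For \eqref{cong:p1Px1}, $\widetilde{x}_1$ is $\mathfrak{g}_+$-invariant because every $\omega(e_\gamma)$ is a polynomial in $x_1,x_2$; hence $P\widetilde{x}_1\equiv\widetilde{x}_1 \pmod{I_+}$, and left multiplication by $y$ preserves $I_+$. For \eqref{cong:p2Px2}, only $P_\alpha$ produces a first-order correction since $[E_\alpha,\widetilde{x}_2]=\widetilde{x}_1$ is the only nonzero raising bracket on $\widetilde{x}_2$; the outer factors $P_{\beta+2\alpha},P_{\beta+\alpha},P_\beta$ act trivially on the resulting $F_\alpha\widetilde{x}_1$ because identities such as $[E_{\beta+\alpha},F_\alpha]=-2E_\beta$ and $[E_{\beta+2\alpha},F_\alpha]=-E_{\beta+\alpha}$ push it onto $\widetilde{x}_1$, which is already $\mathfrak{g}_+$-invariant. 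Left-multiplying by $\widetilde{\partial}_2$, reducing $\widetilde{\partial}_2 F_\alpha\equiv[\widetilde{\partial}_2,F_\alpha]=\widetilde{\partial}_1 \pmod{I_-}$, and performing the weight shift $\widetilde{\partial}_2\cdot(H_\alpha+2)^{-1}=(H_\alpha+1)^{-1}\widetilde{\partial}_2$ then yields the stated congruence.

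The remaining congruences follow the same template with more active factors: $P_\beta$ and $P_{\beta+\alpha}$ for $\widetilde{\partial}_2$ (via $[E_\beta,\widetilde{\partial}_2]=-\mathrm{i}\widetilde{x}_2$ and $[E_{\beta+\alpha},\widetilde{\partial}_2]=-\mathrm{i}\widetilde{x}_1$) and $P_\alpha,P_{\beta+\alpha},P_{\beta+2\alpha}$ for $\widetilde{\partial}_1$. Because multiple projectors now act, cross-terms appear when a first-generation correction $F_\gamma c$ is further modified by a later $P_{\gamma'}$; for instance $[E_\beta,F_{\beta+\alpha}]=F_\alpha$ produces a second-generation $F_\beta F_\alpha \widetilde{x}_1$ contribution when computing $P\widetilde{\partial}_2$. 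I would iterate the one-projector formula to collect all such contributions, then left-multiply by the appropriate $\widetilde{x}_i$ and invoke the coroot identities $H'_{\beta+\alpha}=H_\alpha+2H_\beta$, $H'_{\beta+2\alpha}=H_\alpha+H_\beta$ to combine the dynamical-scalar coefficients. The hardest step will be the combinatorial simplification in \eqref{cong:x1Pp1}, where three active factors and their cross-interactions produce a proliferation of rational-function terms that must collapse into the compact form displayed; careful use of the weight shifts of $(H_\gamma+n)^{-1}$ across $F_\gamma$ and across the final monomials $\widetilde{\partial}_i\widetilde{x}_j$ is required for the simplification.
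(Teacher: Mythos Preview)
Your proposal is correct and follows essentially the same approach as the paper: factor $P$ into a product of $P_\gamma$'s, use \eqref{eq:P_gamma} term by term, and observe that higher-order contributions vanish because $[E_\gamma,[E_\gamma,b]]=0$ on the linear elements. The paper's proof of \eqref{cong:p2Px2} uses the other convex ordering $P=P_\alpha P_{\beta+2\alpha}P_{\beta+\alpha}P_\beta$ (so the three inner factors kill $\widetilde{x}_2$ directly rather than via your $[E_\gamma,F_\alpha]$ argument), but this is a cosmetic difference; for the last two congruences the paper simply states the computations are ``similar in nature'' and rely on the $\mathfrak{sp}(4)$ relations, so your outline of the cross-terms and the rational-function bookkeeping is in fact more detailed than what the paper records.
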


\begin{proof}
	The congruence \eqref{cong:p1Px1} follows immediately from the fact that $[E_\gamma,\widetilde{x}_1]=0$ for all positive roots $\gamma$.
	For the proof of \eqref{cong:p2Px2}, we have
	\begin{align*}
		\widetilde{\partial}_2 P\widetilde{x}_2 &\equiv \widetilde{\partial}_2 P_\alpha P_{\beta+2\alpha} P_{\beta+\alpha} P_\beta \widetilde{x}_2 &\text{definition of $P$}\\
		&\equiv \widetilde{\partial}_2 P_\alpha \widetilde{x}_2 &\text{by $[E_{\beta+k\alpha},\widetilde{x}_i]=0$}\\
		&\equiv \widetilde{\partial}_2\widetilde{x}_2+\bigl[\widetilde{\partial}_2,F_\alpha\bigr]\frac{-1}{H_\alpha}[E_\alpha,\widetilde{x}_2]&\text{by \eqref{eq:P_gamma}}\\
		&\equiv \widetilde{\partial}_2\widetilde{x}_2 + \widetilde{\partial}_1\frac{-1}{H_\alpha}\widetilde{x}_1&\text{by $[\widetilde{\partial}_2,F_\alpha]=\widetilde{\partial}_1$}\\
		&\equiv \widetilde{\partial}_2\widetilde{x}_2+\frac{-1}{H_\alpha+1}\widetilde{\partial}_1\widetilde{x}_1&\text{by $H_\alpha\widetilde{\partial}_1=\widetilde{\partial}_1(H_\alpha-1)$}.
	\end{align*}
	We have proved \eqref{cong:p2Px2}. The remaining computations are similar in nature but also rely on the~$\mathfrak{sp}(4)$ Lie algebra relations.
\end{proof}

\subsection[Finite presentation of D(sp(4))]{Finite presentation of $\boldsymbol{D(\mathfrak{sp}(4))}$}\label{subsec:PresentationDRAsp4}
\label{sec:presentation}

We cite \cite{khoroshkinDiagonalReductionAlgebras2010} as a reference on the discussion of producing generators of reduction algebras generally according to a weight-basis ordering and the use of extremal projectors.

\begin{Lemma}[generators]\label{lemma:finite-generators}
	As a left $R$-module, $D(\mathfrak{sp}(4))$ is free with basis
	\[
		\bigl\{
		\bar \partial_1^{\diamond a} \diamond
		\bar \partial_2^{\diamond b} \diamond
		\bar x_2^{\diamond c} \diamond
		\bar x_1^{\diamond d}
		\mid a,b,c,d\in\mathbb{Z}_{\geq 0}\bigr\},
	\]
	where an exponent $\diamond k$ signifies a $k$-fold diamond product.
	In particular, as an $R$-ring, $D(\mathfrak{sp}(4))$ is generated by the four elements $\bar{x}_{1}$, $\bar{x}_{2}$, $\bar{\partial}_{1}$, $\bar{\partial}_{2}$.
\end{Lemma}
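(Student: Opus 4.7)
The lemma claims that $D(\mathfrak{sp}(4))$ is a free left $R$-module with the displayed diamond-monomial basis; the ``in particular'' clause follows immediately from the basis statement. I would prove spanning and $R$-linear independence separately, relying on PBW for $\mathscr{A}=S^{-1}(A_2\otimes U(\mathfrak{sp}(4)))$ and on a Bernstein-type leading-term analysis of the diamond product.

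\textbf{Spanning.} From $\zeta(v)=\omega(v)\otimes 1+1\otimes v$, we get $1\otimes v=\zeta(v)-\omega(v)\otimes 1$. For a positive root vector $v=e_\gamma$, we have $\zeta(e_\gamma)\in I_+$ (for $\gamma\in\{\alpha,\beta\}$ by definition; for $\gamma\in\{\beta+\alpha,\beta+2\alpha\}$ because $E_\gamma$ is a commutator of $E_\alpha,E_\beta$ and $I_+$ is a left ideal), hence $1\otimes e_\gamma\equiv -\omega(e_\gamma)\otimes 1\pmod{I_+}$ is representable by an $A_2$-monomial. The analogous reduction holds modulo $I_-$ for $v$ a negative root vector, and for $v\in\mathfrak{h}$ the factor $1\otimes h$ is replaced by an element of $R$ plus an $A_2$-correction. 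Inducting on the PBW degree of the $U(\mathfrak{sp}(4))$-tensor factor, and noting that the adjoint action of $\mathfrak{sp}(4)$ on $A_2$ preserves or decreases total $A_2$-degree, one shows that every class in $\mathscr{A}/\II$ is an $R$-combination of $A_2$-monomials in $\tilde x_i,\tilde\partial_i$. Such a monomial can be brought into the order $\tilde\partial_1^a\tilde\partial_2^b\tilde x_2^c\tilde x_1^d$ via \eqref{eq:weylrels}, and converted to diamond form via $\overline{\tilde u\tilde v}=\bar u\diamond\bar v-\overline{u(P-1)v}$: the correction $\overline{u(P-1)v}$ has strictly smaller $A_2$-Bernstein degree by a calculation parallel to Lemma \ref{lem:aPb-congruences}, so a descending induction on Bernstein degree expresses the monomial as an $R$-combination of the proposed diamond monomials.

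\textbf{Freeness.} Equip $\mathscr{A}$ with the $A_2$-Bernstein filtration, extended trivially to $U(\mathfrak{sp}(4))$ and compatibly with the localization; this descends to $D(\mathfrak{sp}(4))$. By the previous paragraph, the symbol of $\bar\partial_1^{\diamond a}\diamond\bar\partial_2^{\diamond b}\diamond\bar x_2^{\diamond c}\diamond\bar x_1^{\diamond d}$ at level $a+b+c+d$ is the image of $\tilde\partial_1^a\tilde\partial_2^b\tilde x_2^c\tilde x_1^d$ in $\mathrm{gr}(\mathscr{A}/\II)$. A nontrivial $R$-linear relation among the diamond monomials would, at its top filtration level, yield a nontrivial $R$-relation among distinct $A_2$-monomials in $\mathrm{gr}(\mathscr{A}/\II)$; this is ruled out by $\II\cap\bigl(R\cdot (A_2\otimes 1)\bigr)=0$, which follows from the PBW decomposition of $A$ since $I_+$ and $I_-$ contain only elements whose PBW expansion has a nonzero $1\otimes U(\mathfrak{sp}(4))$-factor in the positive or negative nilpotent part, respectively.

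\textbf{Main obstacle.} The main technical point is the equality $\II\cap\bigl(R\cdot(A_2\otimes 1)\bigr)=0$ in the localization $\mathscr{A}$. The non-commutativity between $R$-scalars and the $A_2$-generators (through the $\omega(h)$-piece of $\zeta(h)$) requires careful tracking of how $I_+$ and $I_-$ sit within the PBW basis of $\mathscr{A}$; once this is verified, the rest reduces to bookkeeping with Lemma \ref{lem:aPb-congruences} and the Weyl relations \eqref{eq:weylrels}.
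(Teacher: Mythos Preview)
Your overall strategy matches the paper's: first exhibit the ordered Weyl monomials $\tilde\partial_1^a\tilde\partial_2^b\tilde x_2^c\tilde x_1^d+\II$ as an $R$-basis of $\mathscr{A}/\II$ via a PBW decomposition adapted to $\zeta$ (your ``main obstacle'' $\II\cap\bigl(R\cdot(A_2\otimes 1)\bigr)=0$ is exactly what the paper's $U(\mathfrak{h})\otimes U(\mathfrak{g}_-)\otimes A_2\otimes U(\mathfrak{g}_+)\cong A$ isomorphism establishes), and then pass to the diamond monomials by a triangular change of basis.

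The gap is in the second step. Your triangular argument rests on the claim that the correction $\overline{u(P-1)v}$ has strictly smaller Bernstein degree than $\overline{uv}$, but Lemma~\ref{lem:aPb-congruences} itself shows this is false: equation~\eqref{cong:p2Px2} gives $\bar\partial_2\diamond\bar x_2=\overline{\partial_2 x_2}-\frac{1}{H_\alpha+1}\overline{\partial_1 x_1}$, and the correction $\overline{\partial_1 x_1}$ has the \emph{same} Bernstein degree~$2$ as $\overline{\partial_2 x_2}$. More generally, $[u,F_\gamma]$ and $[E_\gamma,v]$ lie in the four-dimensional standard $\mathfrak{sp}(4)$-module spanned by $x_1,x_2,\partial_1,\partial_2$, so the first-order correction has the same total degree as $uv$. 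Consequently your symbol map to $\mathrm{gr}(\mathscr{A}/\II)$ is not the one you describe, and neither the spanning induction nor the independence argument goes through on Bernstein degree alone.

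The paper repairs this by refining the order \emph{within} each Bernstein degree: it orders the four generators by $\mathfrak{h}$-weight as $\partial_1\prec\partial_2\prec x_2\prec x_1$ and uses the induced lexicographic order on monomials. Under this ordering the correction terms are genuinely strictly lower (in the example above, $\partial_1 x_1\prec\partial_2 x_2$ because $\partial_1\prec\partial_2$), since each factor $[u,F_\gamma]$, $[E_\gamma,v]$ moves the weight in the negative direction. This yields a unitriangular transition matrix between the two monomial families, giving spanning and freeness simultaneously. Your argument becomes correct once you replace ``Bernstein degree'' by this weight-lex order (or any total order refining the root order on weights).
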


\begin{proof}
	Let $\mathfrak{g}=\mathfrak{sp}(4)$ with triangular decomposition $\mathfrak{g}=\mathfrak{g}_-\oplus\mathfrak{h}\oplus \mathfrak{g}_+$.
	By the PBW theorem, $U(\mathfrak{g})$ is a free left $U(\mathfrak{h})$-module;
	therefore,
	we have an isomorphism of left $U(\mathfrak{h})$-modules relying on the usual comultiplication $\Delta(x) = x \otimes 1 + 1\otimes x$ on $U(\mathfrak{g})$,
	\begin{gather*}
		U(\mathfrak{h})\otimes U(\mathfrak{g}_-)\otimes A_{2} \otimes U(\mathfrak{g}_+) \to A = A_2\otimes U(\mathfrak{g}), \\
		1\otimes w \otimes 1 \otimes 1 \mapsto w\otimes 1,\qquad w\in A_2,\qquad
		h\otimes 1\otimes 1\otimes 1 \mapsto (\zeta\otimes \Id)\circ\Delta(h),\qquad h\in U(\mathfrak{h}),\\
		1\otimes 1\otimes f\otimes 1 \mapsto (\zeta\otimes \Id)\circ\Delta(f),\qquad f\in U(\mathfrak{g}_-),\\
		1\otimes 1\otimes 1\otimes e \mapsto (\zeta\otimes \Id)\circ\Delta(e),\qquad e\in U(\mathfrak{g}_+),
	\end{gather*}
	where $\zeta$ is the algebra map $U(\mathfrak{g})\to A_2\otimes U(\mathfrak{g})$ from \eqref{eq:Homomorphism-sp4toA}.
	In particular, $A$ is free as a left $U(\mathfrak{h})$-module. Consequently, the localized algebra $\mathscr{A}=S^{-1}A$ is free as a left $R$-module.
	The preceding sentences imply that $\mathscr{A}$ has a basis as a left $R$-module consisting of monomials $FWE$,
	where $F$ and $E$ are images under $\zeta$ of PBW monomials (with respect to any choice of ordered bases) for~$U(\mathfrak{g}_-)$ and $U(\mathfrak{g}_+)$, respectively, and \smash{$W=\bigl(\tilde{\partial}_{1}\bigr)^a\bigl(\tilde{\partial}_{2}\bigr)^b (\tilde{x}_{2})^c(\tilde{x}_{1})^d$} is the image of the corresponding element without tildes in the $\mathbb{C}$-basis $\bigl\{(\partial_{1})^a(\partial_{2})^b (x_{2})^c(x_{1})^d
	\mid a,b,c,d \in \mathbb{Z}_{\geq 0} \bigr\} \subset A_{2}$ for the second Weyl algebra.
	A subset of the $R$-basis $\{FWE\}$ for $\mathscr{A}$ just described is an $R$-basis for the space $\II=\zeta(\mathfrak{g}_-)\mathscr{A}+\mathscr{A}\zeta(\mathfrak{g}_+)$, namely the set of those $FWE$ with either $F \neq \zeta(1)$ or~${E \neq \zeta(1)}$ (or $\zeta(z)$ if $1$ is replaced in the PBW basis with another complex unit $z$). Therefore, we immediately get a basis for $D(\mathfrak{g})=\mathscr{A}/\II$ as a left $R$-module
	\[
		\bigl\{\bigl(\tilde{\partial}_{1}\bigr)^a\bigl(\tilde{\partial}_{2}\bigr)^b (\tilde{x}_{2})^c(\tilde{x}_{1})^d + \II\mid a,b,c,d\in\mathbb{Z}_{\geq 0}\bigr\}.
	\]
	
	We now order the single-element monomials $\tilde{\partial}_{1}$, $\tilde{\partial}_{2}$, $\tilde{x}_{2}$, $\tilde{x}_{1}$, writing $x \prec y$ for ``$x$ precedes~$y$'', based on their $\zeta(\mathfrak{h})$-weights in $\mathscr{A}$ (where $\prec$ is compatible with the standard root order on $\zeta(\mathfrak{h})^{\ast}$). Now $x_{1}$ is a highest weight vector since $[E_\alpha, x_1] = 0$ and $[E_\beta, x_1]=0$. Thus~$\partial_{1}$ is a lowest weight vector using $\Theta$. We further have
	$[F_\alpha, x_1] \in\mathbb{C} x_2$ and $[F_\beta, x_1]=0$; $[E_{\alpha}, \partial_{2}] = 0$ and $[E_{\beta}, \partial_{2}] \in \mathbb{C} x_{2}$;
	therefore, we order $\partial_{1}$, $\partial_{2}$, $x_{2}$, $x_{1}$ by
	\begin{equation}\label{eq:order-xandpartial}
		1 \prec \partial_1 \prec \partial_2 \prec x_2 \prec x_1
	\end{equation}
	and equip the corresponding set of monomials of the form
	$\bigl(\tilde{\partial}_{1}\bigr)^a\bigl(\tilde{\partial}_{2}\bigr)^b (\tilde{x}_{2})^c(\tilde{x}_{1})^d$
	with the lexicographical ordering induced by \eqref{eq:order-xandpartial}.
	Then, by definition of the extremal projector and the Weyl algebra relations \eqref{eq:weylrels},
	\begin{align*}
		\overline{\partial_1^a \partial_2^b x_2^c x_1^d }\diamond \overline{\partial_1^e \partial_2^f x_2^g x_1^h }={}& \bigl(\tilde{\partial}_{1}\bigr)^a \bigl(\tilde{\partial}_{2}\bigr)^b (\tilde{x}_{2})^c (\tilde{x}_{1})^d \bigl(\tilde{\partial}_{1}\bigr)^e \bigl(\tilde{\partial}_{2}\bigr)^f (\tilde{x}_{2})^g (\tilde{x}_{1})^h \\
& +\text{lower order terms} + \Romanbar{II}\\
={}& \bigl(\tilde{\partial}_{1}\bigr)^{a+e} \bigl(\tilde{\partial}_{2}\bigr)^{b+f} (\tilde{x}_{2})^{c+g} (\tilde{x}_{1})^{d+h} + \text{lower order terms} + \Romanbar{II}; \end{align*}
	and consequently, by induction,
	\[
	\bar \partial_1^{\diamond a} \diamond
	\bar \partial_2^{\diamond b} \diamond
	\bar x_2^{\diamond c} \diamond
	\bar x_1^{\diamond d} = \bigl(\tilde{\partial}_{1}\bigr)^a \bigl(\tilde{\partial}_{2}\bigr)^b (\tilde{x}_{2})^c (\tilde{x}_{1})^d + \text{lower order terms} + \Romanbar{II}.\]
	For example, see \eqref{cong:p2Px2},
	\[\bar \partial_2 \diamond \bar x_2 = \partial_2x_2 + f(H_\alpha,H_\beta) \partial_1 x_1 + g(H_\alpha,H_\beta)1 + \Romanbar{II},\]
	where $f$ and $g$ are dynamical scalars in $R$, particularly, $f = f(H_\alpha) = \frac{-1}{H_{\alpha}+1}$ and $g = 1$.
	
	The argument above proves that the two sets
	\smash{$\bigl\{ \bigl(\tilde{\partial}_{1}\bigr)^a \bigl(\tilde{\partial}_{2}\bigr)^b (\tilde{x}_{2})^c (\tilde{x}_{1})^d +\II \mid a,b,c,d \in \mathbb{Z}_{\geq 0} \bigr\}$} and
	$\bigl\{\bar \partial_1^{\diamond a} \diamond
	\bar \partial_2^{\diamond b} \diamond
	\bar x_2^{\diamond c} \diamond
	\bar x_1^{\diamond d} \mid a,b,c,d \in \mathbb{Z}_{\geq 0} \bigr\}$ are related by a triangular matrix with $1$'s on the diagonal. Therefore, the latter is a basis for $\mathscr{A}/\II$ as a left $R$-module. Thus $D(\mathfrak{sp}(4))$ is the smallest $R$-ring containing $\bar{x}_{1}$, $\bar{x}_{2}$, $\bar{\partial}_{1}$, $\bar{\partial}_{2}$.
\end{proof}

\begin{Theorem}[finite presentation]\label{thm:presentation}
	The $R$-ring $D(\mathfrak{sp}(4))$ is generated by $\bar x_1$, $\bar \partial_1$, $ \bar x_2$, $\bar\partial_2$ subject to the following finite set of relations:
	\begin{subequations}\label{eq3.7}		\begin{gather}
			\label{eq:rel-x1-H}
			\bar x_1 H_\alpha = (H_\alpha-1)\bar x_1,\qquad			
			\bar x_1 H_\beta = H_\beta \bar x_1, \\
			\bar \partial_1 H_\alpha = (H_\alpha+1)\bar \partial_1,\qquad
						\bar \partial_1 H_\beta = H_\beta \bar \partial_1, \nonumber
\\
			\bar x_2 H_\alpha = (H_\alpha+1)\bar x_2,
			\qquad
			\bar x_2 H_\beta = (H_\beta -1)\bar x_2,\nonumber\\
			\label{eq:rel-p2-H}
			\bar \partial_2 H_\alpha = (H_\alpha-1)\bar \partial_2,
			\qquad
			\bar \partial_2 H_\beta = (H_\beta +1)\bar \partial_2,\\
			\label{eq:rel-x1x2}
			\bar x_1\diamond\bar x_2 = \Big(1+\frac{1}{H_\alpha+1}\Big)\bar x_2\diamond \bar x_1,\qquad
			\bar \partial_2\diamond\bar \partial_1 = \bar \partial_1\diamond \bar \partial_2 \Big(1+\frac{1}{H_\alpha+1}\Big),\\
			\label{eq:rel-x1p2}
			\bar x_1 \diamond \bar\partial_2 =
			\Big(1+\frac{1}{H_{\beta+\alpha}+1}\Big)
			\bar \partial_2 \diamond \bar x_1,\qquad
			\bar x_2\diamond\bar\partial_1 =
			\bar \partial_1 \diamond \bar x_2
			\Big(1+\frac{1}{H_{\beta+\alpha}+1}\Big),
\\
			\label{eq:rel-x1p1}
			\bar x_1\diamond\bar \partial_1 = -1+\frac{1}{H_\alpha+1}+f_{11}\bar\partial_1\diamond\bar x_1+f_{12}\bar\partial_2\diamond\bar x_2,
			\\
			\label{eq:rel-x2p2}
			\bar x_2\diamond\bar \partial_2 =
			-1+f_{21}\bar \partial_1\diamond \bar x_1 +f_{22}\bar\partial_2\diamond\bar x_2,
		\end{gather}\end{subequations}
	where $f_{ij}=f_{ij}(H_\alpha,H_\beta)\in R$ are given by
	\begin{alignat}{3}
			& f_{11}=\frac{(a+1)(a-1)(b+1)}{a^2b}, \qquad&&
			f_{12}=\frac{-(d+2)}{ac},&\nonumber\\
			& f_{21}=\frac{a(d-1) + c(d+1)}{acd},\qquad&&
			f_{22}=\frac{d+1}{d},&\label{eq:abcdfij}
	\end{alignat}
	and we put $a=H_\alpha+1$, $ b=H_{\beta+2\alpha}+1$, $c=H_{\beta+\alpha}+1$, $d=H_\beta+1$.
\end{Theorem}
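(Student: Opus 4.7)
The plan is to form the $R$-ring $\widetilde{D}$ presented by the four generators $\bar x_1,\bar x_2,\bar\partial_1,\bar\partial_2$ together with the relations \eqref{eq3.7}. Once these relations are shown to hold in $D(\mathfrak{sp}(4))$, the universal property produces a canonical $R$-ring homomorphism $\pi\colon\widetilde D\to D(\mathfrak{sp}(4))$, which is surjective by Lemma \ref{lemma:finite-generators}. Injectivity I will establish by a PBW-style dimension count: the relations alone suffice to rewrite every word in the generators as a left $R$-linear combination of the ordered monomials $M_{a,b,c,d}=\bar\partial_1^{\diamond a}\diamond\bar\partial_2^{\diamond b}\diamond\bar x_2^{\diamond c}\diamond\bar x_1^{\diamond d}$, and since Lemma \ref{lemma:finite-generators} exhibits $\{\pi(M_{a,b,c,d})\}$ as a free $R$-basis of $D(\mathfrak{sp}(4))$, the $M_{a,b,c,d}$ must themselves be $R$-independent in $\widetilde D$, forcing $\pi$ to be an isomorphism.

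Verification of the relations in $D(\mathfrak{sp}(4))$ splits by type. The weight-shift relations \eqref{eq:rel-x1-H}--\eqref{eq:rel-p2-H} are immediate, because each of $\tilde x_i,\tilde\partial_i$ is a $\zeta(\mathfrak{h})$-weight vector in $\mathscr{A}$ with weights $\pm\epsilon_i$, so the commutations with $\zeta(h_\alpha)$ and $\zeta(h_\beta)$ already hold in $\mathscr{A}$ and pass to the diamond product because $P$ fixes the image of $\zeta(U(\mathfrak{h}))$ modulo $\II$. For the off-diagonal relations \eqref{eq:rel-x1x2} and \eqref{eq:rel-x1p2}, I compute $\tilde y\,P\,\tilde z+\II$ directly, using the same technique as Lemma \ref{lem:aPb-congruences}: in each of these four products at least one factor is annihilated by most of the $E_\gamma$, so only a short subword of $P=P_\beta P_{\beta+\alpha}P_{\beta+2\alpha}P_\alpha$ contributes nontrivially modulo $\II$, and the verification reduces to a finite calculation in $\mathscr{A}$. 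The Chevalley anti-automorphism $\Theta$ from Lemma \ref{lem:Dsp4-anti-automorphism} then converts each such identity into its $\bar\partial$-partner via $\bar x_i\leftrightarrow\bar\partial_i$ and reversal of the diamond product, cutting the labor in half. The diagonal relations \eqref{eq:rel-x1p1} and \eqref{eq:rel-x2p2} I derive from \eqref{cong:x1Pp1} together with a parallel formula for $\tilde x_2\,P\,\tilde\partial_2$ obtained by the same method; one then uses \eqref{cong:p1Px1} and \eqref{cong:p2Px2} to express each $\tilde\partial_i\tilde x_j$ appearing on the right-hand side as a left $R$-linear combination of $\bar\partial_1\diamond\bar x_1$ and $\bar\partial_2\diamond\bar x_2$, and collects coefficients using the shifted-coroot identities $H_{\beta+2\alpha}=H_\alpha+H_\beta+1$ and $H_{\beta+\alpha}=H_\alpha+2H_\beta+2$ to match \eqref{eq:abcdfij}.

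For the completeness step, I use the ordering $\bar\partial_1\prec\bar\partial_2\prec\bar x_2\prec\bar x_1$ fixed in the proof of Lemma \ref{lemma:finite-generators}. Each of \eqref{eq:rel-x1x2}--\eqref{eq:rel-x2p2} rewrites a diamond product $\bar y\diamond\bar z$ with $y\succ z$ as an $R$-combination of in-order products together with strictly smaller monomials in the lexicographic ordering on exponent tuples, while \eqref{eq:rel-x1-H}--\eqref{eq:rel-p2-H} allow every element of $R$ to be pushed past any generator. An induction on lexicographic degree then reduces any word in $\widetilde D$ to an $R$-linear combination of the $M_{a,b,c,d}$, closing the proof. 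The principal obstacle is computational and lives in the diagonal relations: the operators $E_\alpha,E_{\beta+\alpha},E_{\beta+2\alpha}$ all fail to commute with $\tilde\partial_1$, so in contrast to the off-diagonal case the full extremal projector must be pushed through $\tilde x_1\,P\,\tilde\partial_1$ (and similarly $\tilde x_2\,P\,\tilde\partial_2$) with the help of genuine $\mathfrak{sp}(4)$ commutators such as $[F_\alpha,E_{\beta+2\alpha}]=E_{\beta+\alpha}$; marshalling the resulting rational functions of $H_\alpha,H_\beta$ into the compact form \eqref{eq:abcdfij} is where the bulk of the actual labor lies.
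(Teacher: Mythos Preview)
Your proposal is correct and follows essentially the same route as the paper: verify the relations via extremal-projector computations (halving the work with $\Theta$), then invoke the free $R$-basis of Lemma~\ref{lemma:finite-generators} to conclude. You are in fact more explicit than the paper on the completeness/injectivity step---the paper compresses your straightening argument into the single sentence ``By Lemma~\ref{lemma:finite-generators}, we are left to show \eqref{eq3.7} hold''---and the ``parallel formula for $\tilde x_2\,P\,\tilde\partial_2$'' you mention is already recorded as the third congruence in Lemma~\ref{lem:aPb-congruences}.
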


\begin{proof}
	By Lemma \ref{lemma:finite-generators}, we are left to show \eqref{eq3.7} hold. We offer computational methods for the different type of relations: There are relations involving only the $(R,R)$-bimodule structure, \eqref{eq:rel-x1-H}--\eqref{eq:rel-p2-H}; relations involving one diamond product, \eqref{eq:rel-x1x2} and \eqref{eq:rel-x1p2}; and those involving two diamond products, \eqref{eq:rel-x1p1} and \eqref{eq:rel-x2p2}.
	
	Relations \eqref{eq:rel-x1-H}--\eqref{eq:rel-p2-H} follow from the corresponding identities in $\mathscr{A}$. For example,
	\begin{align*}
		\bar x_1H_\alpha
		&
		= \tilde x_1H_\alpha+\II
		=(x_1\otimes 1)((x_1\partial_1-x_2\partial_2)\otimes 1+1\otimes h_\alpha)+\II
		\\
		&=(x_1\partial_1-x_2\partial_2-1)x_1\otimes 1+x_1\otimes h_\alpha+\II
		= (H_\alpha-1)\bar x_1,
	\end{align*}
	where we used the Weyl algebra relations of \eqref{eq:weylrels}.
	
	Next we consider relations involving one diamond product, namely, relations \eqref{eq:rel-x1x2} and \eqref{eq:rel-x1p2}. The application of $\Theta$ produces one half of the results. For example, consider \eqref{eq:rel-x1x2} and recall~\eqref{cong:p1Px1}.
	In particular, with $\equiv$ meaning congruence mod $\II$, we have
	\begin{equation}\label{eq:pf-rel-x1x2a}
		\tilde x_2 P \tilde x_1 \equiv \tilde x_2\tilde x_1 \mod{\II}.
	\end{equation}
	Similarly, none of $E_\beta$, $E_{\beta+\alpha}$, $E_{\beta+2\alpha}$ involve $\partial_2$, which implies that
$[E_\gamma, \tilde x_2] = 0$ for all $\gamma\in\Phi_+\setminus\{\alpha\}$,
	but
	\begin{equation*}
		[E_\alpha,\tilde x_2] = [x_1\partial_2\otimes 1+1\otimes e_\alpha, x_2\otimes 1] = x_1\otimes 1=\tilde x_1.
	\end{equation*}
	Likewise,
	$[\tilde x_1, F_\alpha] = -\tilde x_2$.
	Consequently, using \eqref{eq:rel-x1-H}, we have
	\begin{align}
			\tilde x_1 P \tilde x_2 &\equiv \tilde x_1P_\alpha P_{\beta+2\alpha}P_{\beta+\alpha}P_\beta \tilde x_2 \equiv \tilde x_1P_\alpha \tilde x_2
			\equiv \tilde x_1 \tilde x_2 - \frac{1}{H_\alpha+1} [\tilde x_1,F_{\alpha}] [E_{\alpha},\tilde x_2]\nonumber\\
			&\equiv \tilde x_1\tilde x_2 + \frac{1}{H_\alpha+1} \tilde x_2 \tilde x_1
			\equiv \frac{H_\alpha+2}{H_\alpha+1} \tilde x_2\tilde x_1.\label{eq:comp-x1px2}
		\end{align}
	Comparing \eqref{eq:pf-rel-x1x2a} and \eqref{eq:comp-x1px2}, the first relation in \eqref{eq:rel-x1x2} is proved; moreover, $\Theta$ yields the second relation in \eqref{eq:rel-x1x2}. Relation \eqref{eq:rel-x1p2} is proved in the same fashion with a simplification from $\tilde{x}_{1}P\tilde{\partial}_{2}$ to $\tilde{x}_{1}P_{\beta+\alpha}\tilde{\partial}_{2}$ and another application of \eqref{cong:p1Px1}.
	
	Lastly, we analyze the relations involving two diamond products and use Lemma \ref{lem:aPb-congruences}. Consider relation \eqref{eq:rel-x2p2}: Solving for $\widetilde{\partial}_i\widetilde{x}_i$ in \eqref{cong:p1Px1} and \eqref{cong:p2Px2}, we get
	\begin{align}
		\widetilde{\partial}_1\widetilde{x}_1+\II &= \bar\partial_1\diamond\bar x_1,
		\label{eq:p1x1=diamonds}\\
		\widetilde{\partial}_2\widetilde{x}_2+\II &= \bar\partial_2\diamond\bar x_2 + \frac{1}{H_\alpha+1}\bar\partial_1\diamond\bar x_1.
		\label{eq:p2x2=diamonds}
	\end{align}
	Substituting equations \eqref{eq:p1x1=diamonds} and \eqref{eq:p2x2=diamonds} into the right-hand side of equation \eqref{cong:p2Px2}, we arrive~at
	\begin{align*}
		\bar x_2\diamond\bar\partial_2 &= -1+\frac{H_\beta}{(H_\beta+1)(H_{\beta+\alpha}+1)}\bar\partial_1\diamond\bar x_1 + \frac{H_\beta+2}{H_\beta+1}\left(\bar\partial_2\diamond\bar x+2+\frac{1}{H_\alpha+1}\bar\partial_1\diamond\bar x_1\right) \\
		&=-1+\frac{H_\beta+2}{H_\beta+1}\bar\partial_2\diamond\bar x_2 + \frac{H_\beta(H_\alpha+1)+(H_\beta+2)(H_{\beta+\alpha}+1)}{(H_\alpha+1)(H_\beta+1)(H_{\beta+\alpha}+1)}\bar\partial_1\diamond\bar x_1\\
		&=-1+f_{22}\bar\partial_2\diamond\bar x_2 + f_{21}\bar\partial_1\diamond\bar x_1,
	\end{align*}
	proving relation \eqref{eq:rel-x2p2}. To be sure, substituting equations \eqref{eq:p1x1=diamonds} and \eqref{eq:p2x2=diamonds} into \eqref{cong:x1Pp1}, we obtain relation \eqref{eq:rel-x1p1}.
	Thus we have a finite presentation of $D(\mathfrak{sp}(4))$ as a ring over the dynamical scalars $R$. 	
\end{proof}

\begin{Remark}[quantum algebra and integral form]\label{Remark:integral-form}
	Reduction algebras are quantum deformations of associative algebras \cite{herlemontDifferentialCalculusHDeformed2017}. In our case, if we formally send all $H_\gamma\to\infty$ we obtain the usual Weyl algebra relations. More precisely, consider the extension of scalars $D^\ast(\mathfrak{sp}(4))=D(\mathfrak{sp}(4))\bigl[\hbar,\hbar^{-1}\bigr]\cong\mathbb{C}\bigl[\hbar,\hbar^{-1}\bigr]\otimes_\mathbb{C} D(\mathfrak{sp}(4))$. There is an ``integral form'' $D^0(\mathfrak{sp}(4))$ inside this algebra defined as the $\mathbb{C}[\hbar]$-subalgebra of $D^\ast(\mathfrak{sp}(4))$ generated by
	\[\check{H}_\alpha =\hbar H_\alpha,\quad \check{H}_\beta=\hbar H_\beta, \quad \bar x_1,\quad \bar x_2, \quad \bar\partial_1,\quad \bar\partial_2.\]
	Substituting $H_\gamma=\frac{1}{\hbar}\check{H}_\gamma$ in all relations above (and multiplying \eqref{eq:rel-x1-H}--\eqref{eq:rel-p2-H} by $\hbar$), we see that the quotient algebra of $D^0(\mathfrak{sp}(4))$ by the principal ideal $\langle \hbar \rangle$ is isomorphic to the Weyl algebra~$A_2(\mathbb{C})$ tensored (over $\mathbb{C}$) by the ring $R$.
\end{Remark}

\begin{Corollary}\label{cor:Dsp4-structure:domain}
	The ring $D(\mathfrak{sp}(4))$ is a domain $($i.e., there are no left or right zero-divisors$)$.
\end{Corollary}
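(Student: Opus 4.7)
The plan is to exploit the $\hbar$-deformation packaged in Remark~\ref{Remark:integral-form} and run a standard $\hbar$-adic deformation argument, reducing the question to the domain property of the simpler algebra $A_2(\mathbb{C})\otimes_\mathbb{C} R$.

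First, I would verify that the specialization $D^0(\mathfrak{sp}(4))/\hbar D^0(\mathfrak{sp}(4)) \cong A_2(\mathbb{C})\otimes_\mathbb{C} R$ is a domain. The Weyl algebra $A_2$ is a Noetherian domain, and $R = S^{-1}\mathbb{C}[H_\alpha,H_\beta]$ is a commutative Noetherian domain (a localization of a polynomial ring at a denominator set not containing $0$). The tensor product $A_2\otimes_\mathbb{C} R$ is then a central Ore localization of the polynomial extension $A_2[H_\alpha,H_\beta]$, a polynomial ring over the domain $A_2$ in two central indeterminates, and hence a domain; central Ore localization preserves the domain property.

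Next, I would argue that $D^0 := D^0(\mathfrak{sp}(4))$ itself is a domain. Since $D^0$ is a $\mathbb{C}[\hbar]$-subalgebra of $D^{\ast} = \mathbb{C}[\hbar,\hbar^{-1}]\otimes_\mathbb{C} D(\mathfrak{sp}(4))$ and $\hbar$ is invertible in $D^\ast$, the element $\hbar$ is a nonzerodivisor on $D^0$; moreover, every element of $D^\ast$ admits a unique finite Laurent-polynomial expansion in $\hbar$ with coefficients in $D(\mathfrak{sp}(4))$, so $\bigcap_{n\ge 0}\hbar^n D^0 = 0$. If $a,b \in D^0\setminus\{0\}$ satisfied $ab=0$, choosing the largest $m,n\ge 0$ with $a\in\hbar^m D^0$ and $b\in\hbar^n D^0$ one could write $a=\hbar^m\tilde a$, $b=\hbar^n\tilde b$ with $\tilde a,\tilde b\notin \hbar D^0$; torsion-freeness then forces $\tilde a\tilde b = 0$, and reduction modulo $\hbar$ would yield nonzero zero-divisors in $A_2\otimes R$, contradicting the previous paragraph.

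Finally, to transfer the domain property back to $D(\mathfrak{sp}(4))$, I would localize the domain $D^0$ at the multiplicative set generated by $\hbar$ together with $\{\check H_\gamma + n\hbar \mid \gamma\in\Phi_+,\;n\in\mathbb{Z}\}$. The shift relations \eqref{eq:rel-x1-H}--\eqref{eq:rel-p2-H} make each $\check H_\gamma + n\hbar$ normal in $D^0$, so the denominator set is Ore, and the resulting localization is a domain. The canonical map $D(\mathfrak{sp}(4))\hookrightarrow D^\ast$, $d\mapsto 1\otimes d$, lands in this localization, since $H_\gamma = \check H_\gamma/\hbar$ and $(H_\gamma+n)^{-1} = \hbar/(\check H_\gamma + n\hbar)$ are now available; a subring of a domain is a domain.

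The main obstacle is the hidden content of Remark~\ref{Remark:integral-form}: a rigorous proof that $D^0/\hbar D^0 \cong A_2\otimes R$ is genuinely an isomorphism and not merely a surjection. This is a PBW-flatness statement for the $\hbar$-deformation, and while it is plainly suggested by the uniform scaling of the relations in Theorem~\ref{thm:presentation} under $H_\gamma \mapsto \hbar^{-1}\check H_\gamma$, making this rigorous (e.g., by producing a compatible PBW basis of $D^0$ over $\mathbb{C}[\hbar,\check H_\alpha,\check H_\beta]$) is the real work. A secondary but routine point is checking the Ore condition for the denominator set $\{\check H_\gamma + n\hbar\}$, which follows directly from the commutation relations already established.
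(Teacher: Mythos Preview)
Your proposal is correct and takes essentially the same approach as the paper: both deduce that $D^0(\mathfrak{sp}(4))$ is a domain from $D^0/\langle\hbar\rangle\cong A_2\otimes_\mathbb{C} R$ (the paper via an associated-graded filtration argument, you via the equivalent $\hbar$-adic valuation), and then embed $D(\mathfrak{sp}(4))$ into a localization built from $D^0$. Your flagging of the PBW-flatness behind Remark~\ref{Remark:integral-form} as the real unproven input is apt, since the paper likewise invokes that isomorphism without further justification; one small terminological slip is that the elements $\check H_\gamma+n\hbar$ are not individually normal in $D^0$ (commuting past a generator shifts $n$), but the multiplicative set they generate is indeed Ore, which is all you need.
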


\begin{proof}
	One may use Lemma \ref{lemma:finite-generators}
	to prove $D(\mathfrak{sp}(4))$ is a domain.
	Instead, we appeal to Remark~\ref{Remark:integral-form}.
	Since $D(\mathfrak{sp}(4))$ is a subalgebra of $D^\ast(\mathfrak{sp}(4))$, it suffices to show the latter is a domain.
	If~${x,y\in D^\ast(\mathfrak{sp}(4))}$ with $xy=0$, then $\hbar^kx, \hbar^ly \in D^0(\mathfrak{sp}(4))$ for sufficiently large positive integers~$k$,~$l$, and $\bigl(\hbar^kx\bigr)\bigl(\hbar^ly\bigr)=\hbar^{k+l}xy=0$. Thus it suffices to show that $D^0(\mathfrak{sp}(4))$ is a domain.
	The algebra $D^0(\mathfrak{sp}(4))$ has a filtration given by $\smash{D^0(\mathfrak{sp}(4))_{(k)}=D^0(\mathfrak{sp}(4))+D^0(\mathfrak{sp}(4))\hbar}+\cdots+D^0(\mathfrak{sp}(4))\hbar^k$ for $k\in\mathbb{Z}_{\geq 0}$, and the associated graded algebra is isomorphic to $D^0(\mathfrak{sp}(4))/\langle\hbar\rangle$. As mentioned in
	Remark \ref{Remark:integral-form}, the quotient $D^0(\mathfrak{sp}(4))/\langle\hbar\rangle$ is isomorphic to $R\otimes_\mathbb{C} A_2(\mathbb{C})$, which is a~domain. Therefore, $\operatorname{gr}D^0(\mathfrak{sp}(4))$ is a domain; thus $D^0(\mathfrak{sp}(4))$, and consequently, $D(\mathfrak{sp}(4))$ are domains.
\end{proof}

\section{Generalized Weyl algebras}\label{sec:GWA}
In the 1990s, Bavula \cite{bavula1992generalized} described a class of algebras that extends notions of the $n$-th Weyl algebra~$A_{n}$ and called them generalized Weyl algebras (GWAs).
GWAs include important examples from representation theory and ring theory, such as $U\big(\mathfrak{sl}(2)\big)$, $U_q\big(\mathfrak{sl}(2)\big)$, down-up algebras \cite{benkartAlgebras1998}, skew Laurent-polynomial rings, to name a few. The class is closed under taking tensor products and certain skew polynomial extensions.
See \cite{gaddisWeylAlgebraIts2023} for a recent survey on GWAs.

In \cite{van1976harisch}, GWA presentations are given for certain subalgebras of Mickelsson step algebras $S(\mathfrak{g}_{n+1},\mathfrak{g}_n)$, where $\mathfrak{g}_n=\mathfrak{sl}(n)$ or $\mathfrak{so}(n)$. In \cite{mazorchukAssociativeAlgebrasRelated2003}, certain reduction algebras were shown to be twisted GWAs. In this section, we recall the definition of GWAs and observe that $D(\mathfrak{sp}(4))$
is an example of an interesting subclass of GWAs that we term \emph{skew-affine} GWAs.

We will rely on normalized generators of $D(\mathfrak{sp}(4))$ and their relations. The following statement follows from Theorem \ref{thm:presentation}.
\begin{Proposition}[normalization]\label{Prop:Normalized-generators-relations}
	The normalized generators
	\[
		\hat x_1 = x_1,\qquad \hat x_2=(H_\alpha+2)\bar x_2,\qquad
		\hat \partial_1 = \bar \partial_1(H_\alpha+1)(H_{\beta+\alpha}+1),\qquad \hat \partial_2=\bar \partial_2(H_{\beta+\alpha}+1)
	\]
	satisfy
	\begin{equation}\label{eq:hat-relations}
		[\hat x_1,\hat x_2]_\diamond = \bigl[\hat \partial_1,\hat \partial_2\bigr]_\diamond =\bigl[\hat x_1, \hat \partial_2\bigr]_\diamond = \bigl[\hat x_2, \hat \partial_1\bigr]_\diamond=0,
	\end{equation}
	where $[a,b]_\diamond = a\diamond b-b\diamond a$ is the diamond commutator.
\end{Proposition}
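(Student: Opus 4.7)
The statement is a direct consequence of Theorem \ref{thm:presentation}: the normalization factors defining $\hat x_i$ and $\hat\partial_i$ are engineered precisely so that the rational coefficients appearing in \eqref{eq:rel-x1x2} and \eqref{eq:rel-x1p2} telescope away. My plan is to verify each of the four identities separately by expanding the normalized generators, sliding the dynamical scalars in $R$ past $\bar x_i$ and $\bar\partial_i$ via the shift rules \eqref{eq:rel-x1-H}--\eqref{eq:rel-p2-H}, and then invoking the relevant quasi-commutation relation.

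The easiest case is $[\hat x_1,\hat x_2]_\diamond=0$: one has $\hat x_1\diamond\hat x_2=\bar x_1\diamond(H_\alpha+2)\diamond\bar x_2=(H_\alpha+1)\diamond\bar x_1\diamond\bar x_2$ by \eqref{eq:rel-x1-H}, and then by \eqref{eq:rel-x1x2} this becomes $(H_\alpha+1)\cdot\frac{H_\alpha+2}{H_\alpha+1}\bar x_2\diamond\bar x_1=(H_\alpha+2)\diamond\bar x_2\diamond\bar x_1=\hat x_2\diamond\hat x_1$. The companion $[\hat\partial_1,\hat\partial_2]_\diamond=0$ follows by the same scheme using \eqref{eq:rel-p2-H} and the second half of \eqref{eq:rel-x1x2}; after moving the factor $(H_\alpha+1)(H_{\beta+\alpha}+1)$ through $\bar\partial_2$ and cancelling against the matching $(H_{\beta+\alpha}+1)$ on the other side, both sides of the commutator reduce to the same element, namely $\bar\partial_1\diamond\bar\partial_2\cdot(H_\alpha+2)H_{\beta+\alpha}(H_{\beta+\alpha}+1)$.

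For the mixed identities $[\hat x_1,\hat\partial_2]_\diamond=0$ and $[\hat x_2,\hat\partial_1]_\diamond=0$, the key input is \eqref{eq:rel-x1p2} together with the normalization $(H_{\beta+\alpha}+1)$ placed on the $\bar\partial_i$ side. Expanding $H_{\beta+\alpha}=H_\alpha+2H_\beta+2$ and iterating \eqref{eq:rel-x1-H}--\eqref{eq:rel-p2-H} yields $\bar x_1\diamond H_{\beta+\alpha}=(H_{\beta+\alpha}-1)\diamond\bar x_1$ and $\bar\partial_2\diamond H_{\beta+\alpha}=(H_{\beta+\alpha}+1)\diamond\bar\partial_2$, so that $H_{\beta+\alpha}$ in fact commutes with the product $\bar\partial_2\diamond\bar x_1$. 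After substitution the scalar $(H_{\beta+\alpha}+1)$ absorbs the denominator in \eqref{eq:rel-x1p2}, and both $\hat x_1\diamond\hat\partial_2$ and $\hat\partial_2\diamond\hat x_1$ collapse to $(H_{\beta+\alpha}+2)\diamond\bar\partial_2\diamond\bar x_1$; the second mixed identity is proved by the parallel computation using the symmetric half of \eqref{eq:rel-x1p2}.

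The main ``obstacle'' is not conceptual but purely bookkeeping: one must correctly track the integer shift applied to each $H_\gamma$ as it passes a given generator. A clean sanity check is to record that $\bar x_i$ and $\bar\partial_i$ are $\zeta(\mathfrak h)$-weight vectors of weights $\epsilon_i$ and $-\epsilon_i$ respectively, so that moving $H_\gamma$ past a weight-$\lambda$ element from right to left shifts $H_\gamma$ by $-\lambda(h'_\gamma)$. This principle makes the cancellation of all rational coefficients structurally transparent and also explains why the normalization in Proposition \ref{Prop:Normalized-generators-relations} is essentially forced: each factor $(H_\gamma+1)$ is inserted precisely to kill the $(H_\gamma+1)^{-1}$ that the extremal projector introduces through $P_\gamma$.
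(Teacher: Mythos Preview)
Your proof is correct and follows essentially the same route as the paper: the proposition is stated as an immediate consequence of Theorem~\ref{thm:presentation}, and the explicit verifications you sketch are precisely the computations carried out in Appendix~\ref{Appendix:sec:Zero-Commutator} when checking that $\phi$ preserves relations~\eqref{eq:GWA-rels2}. Your shift bookkeeping (e.g., $\bar x_1 H_{\beta+\alpha}=(H_{\beta+\alpha}-1)\bar x_1$, the fact that $H_{\beta+\alpha}$ commutes with $\bar x_1\diamond\bar\partial_2$, and the common value $\bar\partial_1\diamond\bar\partial_2(H_\alpha+2)H_{\beta+\alpha}(H_{\beta+\alpha}+1)$ for the $\hat\partial$-commutator) matches the paper's line by line.
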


\subsection[The differential reduction algebra D(sp(4)) is a generalized Weyl algebra]{The differential reduction algebra $\boldsymbol{D(\mathfrak{sp}(4))}$ is\\ a generalized Weyl algebra}\label{subsec:GWAsp4}
\begin{Definition}[GWA]\label{def:GWA}
	Let $\mathbb{K}$ be a field.
	Let $B$ be an associative $\mathbb{K}$-algebra, $n$ be a positive integer, $\sigma=(\sigma_1,\sigma_2,\dots,\sigma_n)\in\Aut_\mathbb{K}(B)^n$ an $n$-tuple of commuting $\mathbb{K}$-algebra automorphisms of $B$, $t=(t_1,t_2,\dots,t_n)\in Z(B)^n$ an $n$-tuple of elements of the center of $B$. The associated \emph{generalized Weyl algebra $($GWA$)$ of rank $($or degree$)$ $n$}, denoted $B(\sigma,t)$, is the $B$-ring generated by $X_1, Y_1, \dots, X_n, Y_n$ subject to the following relations, for $i,j=1,2,\dots,n$ and all $b\in B$:
	\begin{subequations}\label{eq:GWA-rels}
		\begin{gather}
			\label{eq:GWA-rels1}
			X_ib=\sigma_i(b)X_i,\qquad bY_i=Y_i\sigma_i(b),\\
			\label{eq:GWA-rels2}
			X_iX_j=X_jX_i,\qquad Y_iY_j=Y_jY_i,\qquad
			X_iY_j=Y_jX_i \qquad\text{if}\quad i\neq j,\\
			Y_iX_i=t_i,\qquad X_iY_i=\sigma_i(t_i).\nonumber
		\end{gather}
	\end{subequations}
\end{Definition}

\begin{Example}[revisiting the Weyl algebra]
	Taking $\mathbb{K}=\mathbb{C}$, $B=\mathbb{C}[u_1,u_2,\dots,u_n]$, $\sigma_i$ defined by $\sigma_i(u_j)=u_j-\delta_{ij}$, and $t_i=u_i$, the GWA $B(\sigma,t)$ is isomorphic to the $n$-th Weyl algebra $A_n(\mathbb{C})$ via $X_i\mapsto x_i$, $Y_i\mapsto \partial_i$.
\end{Example}

\begin{Proposition}[automorphisms]\label{Prop:Automorphisms}
	Let $B = R[t_{1},t_{2}]$, where $R$ is the ring of dynamical scalars defined in \eqref{eq:R-ring-of-dynamical-scalars}. The maps $\sigma_{1},\sigma_{2}\colon B \rightarrow B$ are automorphisms of $B$ when defined as follows
		\begin{gather*}
			\sigma_1(H_\alpha) =H_\alpha-1, \qquad \sigma_2(H_\alpha) =H_\alpha+1,\qquad
			\sigma_1(H_\beta) =H_\beta, \qquad \sigma_2(H_\beta) =H_\beta-1,\\
			\sigma_1(t_1) =\hat c_1+\hat f_{11}t_1+\hat f_{12}t_2,\qquad \sigma_2(t_1) =t_1,\\
			\sigma_1(t_2) =t_2, \qquad\sigma_2(t_2) =\hat c_2+\hat f_{21}t_1+\hat f_{22}t_2,
		\end{gather*}
	where
	\begin{gather*}
		\hat{c}_1 = -H_\alpha(H_{\beta+\alpha}+1),\qquad
		\hat{c}_2 = -(H_\alpha+2)(H_{\beta+\alpha}+1),\\
		\hat f_{i1} = f_{i1}\frac{(H_\alpha+i)(H_{\beta+\alpha}+1)}{(H_\alpha+2)(H_{\beta+\alpha}+2)},\qquad
		\hat f_{i2} = f_{i2}\frac{(H_\alpha+i)(H_{\beta+\alpha}+1)}{(H_\alpha+1)(H_{\beta+\alpha}+2)},\qquad i=1,2,
	\end{gather*}
	and $f_{ij}=f_{ij}(H_\alpha,H_\beta)$ were defined in \eqref{eq:abcdfij}.
	
\end{Proposition}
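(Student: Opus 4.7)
The plan is to verify that each $\sigma_i$ is a well-defined ring endomorphism of $B=R[t_1,t_2]$ and then exhibit a two-sided inverse. Because $B$ is a commutative polynomial ring over $R$, specifying $\sigma_i|_R$ together with the values on $t_1$ and $t_2$ determines a unique algebra endomorphism of $B$, provided the prescribed data actually lands in $B$.

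First I would check that each $\sigma_i|_R$ is a well-defined automorphism of $R$. Since $U(\mathfrak{h})\cong\mathbb{C}[H_\alpha,H_\beta]$, and the shifted-coroot identities $H_{\beta+2\alpha}=H_\alpha+H_\beta+1$ and $H_{\beta+\alpha}=H_\alpha+2H_\beta+2$ make every $H_\gamma$ an integer-affine function of $H_\alpha,H_\beta$, integer shifts of $H_\alpha$ or $H_\beta$ permute the generators of the Ore set $S$. Hence $\sigma_i$ extends from $U(\mathfrak{h})$ to $R=S^{-1}U(\mathfrak{h})$, with the inverse shift as its inverse. Next I would extend to $B$ by declaring the images of $t_1,t_2$; this is legitimate once each coefficient $\hat c_i,\hat f_{ij}$ is seen to lie in $R$, which is a direct inspection: every denominator appearing in the formulas is a product of factors of the form $H_\gamma+n$ with $\gamma\in\Phi_+$ and $n\in\mathbb{Z}$, and hence belongs to $S$.

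For invertibility, the key observation is that $\sigma_1$ fixes $t_2$ and sends $t_1$ to an $R$-affine expression in $t_1,t_2$ with leading coefficient $\hat f_{11}$, so a candidate inverse is
\[
\sigma_1^{-1}(t_2)=t_2,\qquad \sigma_1^{-1}(t_1)=\sigma_1^{-1}\bigl(\hat f_{11}^{-1}\bigr)\bigl(t_1-\sigma_1^{-1}(\hat c_1)-\sigma_1^{-1}(\hat f_{12})\,t_2\bigr),
\]
and symmetrically for $\sigma_2$ in the $t_2$-direction. For this to land in $B$, one needs $\hat f_{11}$ and $\hat f_{22}$ to be units of $R$. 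A short simplification using \eqref{eq:abcdfij} together with the normalization factors in the definition of $\hat f_{ij}$ shows that $\hat f_{11}$ and $\hat f_{22}$ each reduce to a ratio whose numerator and denominator are products of elements of $S$, hence units in $R$. Given the units, a direct substitution (again using the same $S$-bookkeeping and the already-established fact that $\sigma_i|_R$ is invertible) verifies that the candidate $\sigma_i^{-1}$ satisfies $\sigma_i\sigma_i^{-1}=\sigma_i^{-1}\sigma_i=\mathrm{Id}_B$.

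The main obstacle I expect is purely administrative: verifying the unit-ness of $\hat f_{11}$ and $\hat f_{22}$ requires carefully cancelling factors between numerator and denominator in \eqref{eq:abcdfij} after multiplying by the normalization ratio, and confirming that every surviving factor in both top and bottom is of the form $H_\gamma+n$ with $\gamma\in\Phi_+$ and $n\in\mathbb{Z}$. Once this check is completed the remainder is mechanical: each of the required algebra identities reduces to a polynomial identity in the two variables $t_1,t_2$ with coefficients in the (already-verified) automorphism $\sigma_i|_R$ of $R$.
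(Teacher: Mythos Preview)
Your argument is correct and complete for what Proposition~\ref{Prop:Automorphisms} actually asserts, namely that each $\sigma_i$ is an automorphism of $B$. The paper does not write out a proof of this proposition at all: it states the formulas, immediately passes to the Remark about commutativity of $\sigma_1$ and $\sigma_2$, and defers that (harder) point to the Appendix via the GWA isomorphism and the domain property of $D(\mathfrak{sp}(4))$. The automorphism claim itself is treated as routine, and your direct verification---integer shifts on $H_\alpha,H_\beta$ preserve the Ore set $S$, the coefficients $\hat c_i,\hat f_{ij}$ visibly lie in $R$, and the triangular-affine form of $\sigma_i$ on $(t_1,t_2)$ is invertible once $\hat f_{ii}\in R^\times$---is exactly the check the authors leave implicit.

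One small point worth tightening: your candidate $\sigma_i^{-1}$ is constructed so that $\sigma_i\circ\sigma_i^{-1}=\mathrm{Id}_B$; to conclude it is a two-sided inverse you either repeat the computation for the other composition or, more cleanly, observe that $\sigma_i$ factors as the base-ring automorphism (shift on $R$, identity on $t_1,t_2$) followed by an $R$-linear upper- (or lower-) triangular affine substitution in $(t_1,t_2)$ with unit diagonal entry $\hat f_{ii}$, and each factor is manifestly an automorphism. Your simplification of $\hat f_{11}$ and $\hat f_{22}$ to ratios of products of elements of $S$ is the only genuine computation, and it goes through as you say.
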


\begin{Remark}[commuting automorphisms]
	It is far from obvious that the automorphisms $\sigma_1$ and $\sigma_2$ actually commute; however, we will show indirectly that $\sigma_1$ and $\sigma_2$ commute during an extended proof of Theorem \ref{thm:GWA-realization} found in Appendix~\ref{Appendix:sec:Zero-Commutator}.
\end{Remark}

\begin{Theorem}[$D(\mathfrak{sp}(4))$ is a GWA]\label{thm:GWA-realization}
	With $B$ and $\sigma_{i}$ as in Proposition {\rm\ref{Prop:Normalized-generators-relations}}, there is a $\mathbb{C}$-algebra isomorphism
$\phi\colon B(\sigma,t) \longrightarrow D(\mathfrak{sp}(4))
$
	satisfying
	\[\phi(H_\alpha)=H_\alpha,\qquad
	\phi(H_\beta)=H_\beta,\qquad
	\phi(X_i)=\hat x_i,\qquad
	\phi(Y_i)=\hat \partial_i,\qquad i=1,2,
	\]
	where we use the normalized generators of $D(\mathfrak{sp}(4))$ from Proposition {\rm\ref{Prop:Automorphisms}}.
\end{Theorem}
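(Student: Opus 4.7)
I would prove Theorem~\ref{thm:GWA-realization} by a two-step strategy: (i) show the assignment on generators extends to a well-defined $\mathbb{C}$-algebra homomorphism $\phi\colon B(\sigma,t)\to D(\mathfrak{sp}(4))$, and (ii) show $\phi$ is a bijection by matching PBW-type bases. First I would fix the target elements of the generators of $B$: $H_\alpha,H_\beta$ map to themselves, $X_i\mapsto \hat x_i$, $Y_i\mapsto \hat\partial_i$, and, crucially, $t_i\mapsto \hat\partial_i\diamond \hat x_i$. The whole proof rests on checking that each of the three families of GWA relations in~\eqref{eq:GWA-rels} is already a consequence of Theorem~\ref{thm:presentation} and Proposition~\ref{Prop:Normalized-generators-relations} after applying the rescaling of Proposition~\ref{Prop:Normalized-generators-relations} (normalization).

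For the well-definedness I would verify each GWA relation in turn. The $R$-shift relations $\hat x_i\, r = \sigma_i(r)\,\hat x_i$ and $r\,\hat\partial_i = \hat\partial_i\,\sigma_i(r)$ for $r\in R$ follow directly from~\eqref{eq:rel-x1-H}--\eqref{eq:rel-p2-H} after absorbing the scalar factors $(H_\alpha+2)$, $(H_\alpha+1)(H_{\beta+\alpha}+1)$, $(H_{\beta+\alpha}+1)$ used to define the hatted generators; each of these factors is a polynomial in $H_\alpha,H_\beta$, and the shifts on $H_\alpha,H_\beta$ prescribed by $\sigma_1,\sigma_2$ are exactly those forced by the four shift rules. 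The commutation relations $X_iX_j=X_jX_i$, $Y_iY_j=Y_jY_i$, and $X_iY_j=Y_jX_i$ (for $i\neq j$) are precisely the four identities in \eqref{eq:hat-relations}, i.e., Proposition~\ref{Prop:Normalized-generators-relations}. The relation $Y_iX_i=t_i$ is tautological given our choice of $\phi(t_i)$. The remaining relation $X_iY_i=\sigma_i(t_i)$ is the substantive one: I would take \eqref{eq:rel-x1p1} (resp.~\eqref{eq:rel-x2p2}), multiply on the right by the appropriate normalization factor to convert $\bar x_i\diamond\bar\partial_i$ into $\hat x_i\diamond\hat\partial_i$, and then rewrite the two diamond products $\bar\partial_j\diamond\bar x_j$ on the right-hand side in terms of $\hat\partial_j\diamond\hat x_j=\phi(t_j)$. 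A direct computation with the factors $a,b,c,d$ of Theorem~\ref{thm:presentation} should reproduce exactly the formulas for $\hat c_i$ and $\hat f_{ij}$ in Proposition~\ref{Prop:Automorphisms}. Extending $\phi$ from $t_i$ to polynomials in $B=R[t_1,t_2]$ requires also checking that $\phi(t_1)$ and $\phi(t_2)$ commute with each $\hat x_j,\hat \partial_j$ in the manner dictated by $\sigma_j$; for $j\neq i$ this is an immediate consequence of the four zero-commutator identities in~\eqref{eq:hat-relations}, and for $j=i$ it follows from the relations $Y_iX_i=t_i$, $X_iY_i=\sigma_i(t_i)$ we just verified.

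For bijectivity I would use the basis of $D(\mathfrak{sp}(4))$ described in Lemma~\ref{lemma:finite-generators}: a free $R$-basis is $\{\bar\partial_1^{\diamond a}\diamond \bar\partial_2^{\diamond b}\diamond \bar x_2^{\diamond c}\diamond \bar x_1^{\diamond d}\}$, and after rescaling this transforms (by an invertible scalar matrix over $R$) into $\{\hat\partial_1^{\diamond a}\diamond \hat\partial_2^{\diamond b}\diamond \hat x_2^{\diamond c}\diamond \hat x_1^{\diamond d}\}$. On the GWA side, the standard PBW basis of $B(\sigma,t)$ is given by $B$-multiples of the ordered monomials $Y_1^{b_1}Y_2^{b_2}X_2^{a_2}X_1^{a_1}$ where $a_ib_i=0$; combined with $Y_iX_i=t_i\in B$ this yields a free $R$-basis of $B(\sigma,t)$ indexed by $(a,b,c,d)\in\mathbb{Z}_{\geq 0}^{4}$ that maps bijectively, under $\phi$, onto the basis above. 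Surjectivity is then clear, and injectivity follows by comparing $R$-bases. I expect the main obstacle to be two-fold: the bookkeeping in step~(ii) of matching the two rescalings with the dynamical-scalar coefficients $\hat f_{ij}$, and the implicit verification that $\sigma_1,\sigma_2$ genuinely commute. The second is subtle because the authors themselves postpone it to an appendix; the cleanest route is to observe that the existence of $\phi$ as an algebra map \emph{forces} $\sigma_1\sigma_2=\sigma_2\sigma_1$, since $t_1,t_2$ have unique preimages in $B$ and the images $\hat\partial_i\diamond\hat x_i$ satisfy a consistency identity under interchanging the two normalized variables. Thus establishing the isomorphism and establishing commutativity of the $\sigma_i$ can be done simultaneously as a closed loop.
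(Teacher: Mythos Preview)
Your plan matches the paper's proof (Appendix~\ref{Appendix:sec:Zero-Commutator}) almost exactly: check the GWA relations~\eqref{eq:GWA-rels} on the normalized generators via Theorem~\ref{thm:presentation} and Proposition~\ref{Prop:Normalized-generators-relations}, then prove bijectivity by matching the $R$-basis of Lemma~\ref{lemma:finite-generators} against the standard GWA monomial basis (Bavula). The organization you give for the relation-checking, including the tautological $Y_iX_i=t_i$ and the substantive $X_iY_i=\sigma_i(t_i)$ obtained by renormalizing \eqref{eq:rel-x1p1}--\eqref{eq:rel-x2p2}, is precisely what the paper does.

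The one place where your sketch does not yet constitute a proof is the commutativity of $\sigma_1$ and $\sigma_2$. Your ``closed loop'' intuition is the right one, but the actual mechanism in the paper is concrete and uses an ingredient you do not mention: from $\hat x_1\diamond\hat x_2=\hat x_2\diamond\hat x_1$ and the verified shift rules one gets, for any $b\in B$,
\[
0=(\hat x_1\diamond\hat x_2-\hat x_2\diamond\hat x_1)\diamond\phi(b)=\bigl(\phi(\sigma_1\sigma_2(b))-\phi(\sigma_2\sigma_1(b))\bigr)\diamond\hat x_1\diamond\hat x_2,
\]
and one then cancels $\hat x_1\diamond\hat x_2$ using Corollary~\ref{cor:Dsp4-structure:domain} (that $D(\mathfrak{sp}(4))$ is a domain), together with injectivity of $\phi|_B$ (which follows from Lemma~\ref{lemma:finite-generators}). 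Your phrase ``consistency identity under interchanging the two normalized variables'' does not capture this cancellation step; without the domain property the loop does not close, since a priori one only obtains an identity in $D(\mathfrak{sp}(4))$ multiplied on the right by a possibly zero-divisor.
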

\begin{proof}
	An extended argument is found in Appendix \ref{Appendix:sec:Zero-Commutator}. The proof relies on the relations of the normalized generators in \eqref{eq:hat-relations} and the finite presentation of $D(\mathfrak{sp}(4))$ found in Theorem~\ref{thm:presentation}.
\end{proof}

\subsection{Rank two skew-affine GWAs}\label{subsec:GWA-skew-affine}
We consider a class of GWAs we call \emph{skew-affine}, restricting to the case of rank two. These GWAs are believed to refine the connection \cite{vandenhomberghNoteMickelssonStep1975} between reduction algebras and GWAs within the study of noncommutative rings.

\begin{Definition}[the skew-affine ansatz for GWAs of rank two]\label{def:SA-GWArank2}
	Suppose $T$ is a finitely generated commutative $\mathbb{C}$-algebra and let $D=T[t_1,t_2]$ be the polynomial algebra over $T$ in two indeterminates $t_i$. Consider the following ansatz for $\mathbb{C}$-algebra automorphisms $\sigma_i$ of $D$
	\begin{gather}\label{eq:SA-GWA}
		\sigma_i(t_i)=c_i+g_{i1}t_1+g_{i2}t_2,\qquad i=1,2,\\
		\sigma_i(t_j)=t_j,\qquad i\neq j,\qquad
		\sigma_i|_T\in\Aut_\mathbb{C}(T),\nonumber
	\end{gather}
	where $c_i$ and $g_{ij}$ are some fixed elements of $T$. A GWA defined by \eqref{eq:SA-GWA} is called a \emph{skew-affine generalized Weyl algebra of rank two}.
\end{Definition}

As a corollary to Theorem \ref{thm:GWA-realization}, $D(\mathfrak{sp}(4))$ is a rank two skew-affine GWA. Its representation theory may be determined by suitable tools of GWAs; moreover, we conjecture that a family of skew-affine GWAs are found as differential reduction algebras of symplectic Lie algebras. We note that the defining relations of skew-affine GWAs are new to GWAs, and it will be useful to address higher-order generalizations in future work.

\appendix

\section[Zero commutator of automorphisms in Theorem 4.6]{Zero commutator of automorphisms in Theorem \ref{thm:GWA-realization}}\label{Appendix:sec:Zero-Commutator}
The computations below show that $\sigma_{1}$ and $\sigma_{2}$ commute.
We omit $\diamond$ for brevity.

\begin{proof}[Extended argument for proof of Theorem \ref{thm:GWA-realization}]
	Let $\phi\colon B\to D(\mathfrak{sp}(4))$ be the $\mathbb{C}$-algebra homomorphism determined by $\phi|_R=\Id_R$ and $\phi(t_i)=\hat\partial_i\hat x_i$ for $i=1,2$. Extend $\phi$ to $B\cup\{X_1,Y_1,X_2,Y_2\}$ by $\phi(X_i)=\hat x_i$, $\phi(Y_i)=\hat\partial_i$. We must show that the GWA relations \eqref{eq:GWA-rels} are satisfied by the images of $\phi$.
	First, we prove that relations \eqref{eq:GWA-rels2} are preserved.
	
	Using $\bar{x}_1 H_\alpha = (H_\alpha-1)\bar{x}_1$,
	\[
	\hat{x}_1\hat{x}_2 = \bar{x}_1(H_\alpha+2)\bar{x}_2 = (H_\alpha+1)\bar{x}_1\bar{x}_2=(H_\alpha+1)\frac{H_\alpha+2}{H_\alpha+1}\bar{x}_2\bar{x}_1=\hat{x}_2\hat{x}_1.
	\]
	
	Using $H_{\beta+\alpha}\bar{\partial}_1=\bar{\partial}_1(H_{\beta+\alpha}-1)$, $\bar{\partial}_2 H_\alpha=(H_\alpha-1)\bar{\partial}_2$, and $\bar{\partial}_2 H_{\beta+\alpha} = (H_{\beta+\alpha}+1)\bar{\partial}_2$, we get
	\begin{align*}
		\hat{\partial}_2\hat{\partial}_1 &= \bar{\partial}_2(H_{\beta+\alpha}+1)\bar{\partial}_1(H_\alpha+1)(H_{\beta+\alpha}+1)=\bar{\partial}_2\bar{\partial}_1(H_\alpha+1)H_{\beta+\alpha}(H_{\beta+\alpha}+1)\\
		&=\bar{\partial}_1\bar{\partial}_2\frac{H_\alpha+2}{H_\alpha+1}(H_\alpha+1)H_{\beta+\alpha}(H_{\beta+\alpha}+1)=
		\bar{\partial}_1\bar{\partial}_2(H_\alpha+2)H_{\beta+\alpha}(H_{\beta+\alpha}+1)\\
		&=\bar{\partial}_1 (H_\alpha+1)(H_{\beta+\alpha}+1)\bar{\partial}_2(H_{\beta+\alpha}+1)=\hat{\partial}_1\hat{\partial}_2.
	\end{align*}
	Next, since $H_{\beta+\alpha}$ commutes with $\bar{x}_1\bar{\partial}_2$, and $H_{\beta+\alpha}\bar{\partial}_2=\bar{\partial}_2(H_{\beta+\alpha}-1)$, we have
	\begin{align*}
		\hat{x}_1\hat{\partial}_2 &= \bar{x}_1\bar{\partial}_2(H_{\beta+\alpha}+1)=
		(H_{\beta+\alpha}+1)\bar{x}_1\bar{\partial}_2=
		(H_{\beta+\alpha}+1)\frac{H_{\beta+\alpha}+2}{H_{\beta+\alpha}+1}\bar{\partial}_2\bar{x}_1\\
		&=(H_{\beta+\alpha}+2)\bar{\partial}_2\bar{x}_1=\bar{\partial}_2(H_{\beta+\alpha}+1)\bar{x}_1=\hat{\partial}_2\hat{x}_1.
	\end{align*}
	From $H_\alpha\bar{x}_2=\bar{x}_2(H_\alpha-1)$, $H_\alpha\bar{\partial}_1=\bar{\partial}_1(H_\alpha-1)$, and $\bar{x}_2 H_{\beta+\alpha} = (H_{\beta+\alpha}-1)\bar{x}_2$, we see
	\begin{align*}
		\hat{x}_2\hat{\partial}_1&=(H_\alpha+2)\bar{x}_2\bar{\partial}_1(H_\alpha+1)(H_{\beta+\alpha}+1)=\bar{x}_2\bar{\partial}_1 H_\alpha (H_\alpha+1)(H_{\beta+\alpha}+1)\\
		&=\bar{\partial}_1\bar{x}_2\frac{H_{\beta+\alpha}+2}{H_{\beta+\alpha}+1}H_\alpha (H_\alpha+1)(H_{\beta+\alpha}+1)=
		\bar{\partial}_1\bar{x}_2 H_\alpha(H_\alpha+1)(H_{\beta+\alpha}+2)\\
		&=\bar{\partial}_1(H_\alpha+1)(H_\alpha+2)(H_{\beta+\alpha}+1)\bar{x}_2=\hat{\partial}_1\hat{x}_2.
	\end{align*}
	We have proved that the four relations in \eqref{eq:GWA-rels2} are preserved by $\phi$.
	
	Next, to show that \eqref{eq:GWA-rels1} holds for all $b\in B$, it suffices to show \eqref{eq:GWA-rels1} holds for $b\in\{H_\alpha, H_\beta, t_1, t_2\}$ since $\sigma_i$ are $\mathbb{C}$-algebra homomorphisms.
	For $b=H_\alpha$ and $b=H_\beta$, \eqref{eq:GWA-rels1} follows directly from \eqref{eq:rel-x1-H}--\eqref{eq:rel-p2-H} and the definition of $\sigma_i$.
	
	For $b=t_i$, the identities are immediately verified once we prove that $\phi(t_i)=\phi(Y_i)\phi(X_i)$ and~${\phi\big(\sigma_i(t_i)\big)=\phi(X_i)\phi(Y_i)}$, because then
	\[\phi(X_i)\phi(t_i)=\phi(X_i)\phi(Y_i)\phi(X_i)=\phi\big(\sigma_i(t_i)\big)\phi(X_i)\]
	and for $j\neq i$
	\[ \phi(X_i)\phi(t_j)=\phi(X_i)\phi(Y_j)\phi(X_j)=\hat{x}_i\hat{\partial}_j\hat{x}_j=\hat{x}_j\hat{\partial}_j\hat{x}_i=\phi\big(\sigma_i(t_j)\big)\phi(X_i);\]
	likewise for $Y_i$.
	
	Actually, that $\phi(t_i)=\phi(Y_i)\phi(X_i)$ is immediate by definition of $\phi(t_i)$. So, what remains is to prove that
	\begin{equation}\label{eq:pf-phi-1}
		\phi\big(\sigma_i(t_i))=\phi(X_i)\phi(Y_i)
	\end{equation}
	for $i=1,2$.
	
	Now the left-hand side of \eqref{eq:pf-phi-1} equals
	\begin{align}
		\phi(\sigma_i(t_i))&=\phi\bigl(c_i+\hat f_{i1}t_1+\hat f_{i2}t_2\bigr)=c_i+\hat f_{i1}\hat{\partial}_1\hat{x}_1+\hat f_{i2}\hat{\partial}_2\hat{x}_2 \nonumber\\
		&=c_i+\hat f_{i1}\cdot(H_\alpha+2)(H_{\beta+\alpha}+2)\bar{\partial}_1\bar{x}_1+\hat f_{i2}\cdot (H_\alpha+1)(H_{\beta+\alpha}+2)\bar{\partial}_2\bar{x}_2\nonumber\\
		&=c_i+(H_\alpha+i)(H_{\beta+\alpha}+1)\bigl(f_{i1}\bar{\partial}_1\bar{x}_1+f_{i2}\bar{\partial}_2\bar{x}_2\bigr);\label{eq:pf-phi-2}
	\end{align}
	indeed, the right-hand side of \eqref{eq:pf-phi-1} equals
	\[\hat{x}_i\hat{\partial}_i=(H_\alpha+i)(H_{\beta+\alpha}+1)\bar{x}_i\bar{\partial}_i,\]
	which equals \eqref{eq:pf-phi-2}, by \eqref{eq:rel-x1p1}.
	
	At this point we can establish that $\sigma_1$ and $\sigma_2$ actually commute.
	We have for any $b\in B$,
	\[0=(\hat{x}_1\hat{x}_2 -\hat{x}_2\hat{x}_1)b = (\sigma_1(\sigma_2(b))-\sigma_2(\sigma_1(b)))\hat{x}_1\hat{x}_2.\]
	By Corollary \ref{cor:Dsp4-structure:domain}, it follows that $\sigma_1$ and $\sigma_2$ commute.
	
	We have shown all GWA relations \eqref{eq:GWA-rels} are preserved by $\phi$. Therefore, there exists a well-defined $\mathbb{C}$-algebra homomorphism $\phi\colon B(\sigma,t)\to D(\mathfrak{sp}(4))$ as in the statement of Theorem \ref{thm:GWA-realization}. Moreover, $\phi$ is a homomorphism of left $R$-modules.
	By \cite{bavula1992generalized}, when $B=R[t_1,t_2,\dots,t_n]$ for some subring $R$, the monomials $Y_1^aY_2^bX_1^cX_2^d$ ($a,b,c,d\in\mathbb{Z}_{\geq 0}$) form a basis for $B(\sigma,t)$ as a left $R$-module. These are mapped under $\phi$ to the monomials $\hat{\partial}_1^a\hat{\partial}_2^b\hat{x}_1^c\hat{x}_2^d$ which form a left $R$-basis for~$D(\mathfrak{sp}(4))$ by Lemma \ref{lemma:finite-generators}.
	Therefore, $\phi$ is bijective.
\end{proof}

\subsection*{Acknowledgements}
The authors thank the referees for their comments inspiring the state of the current paper and suggestions for continued research.
J.T.H.\ is partially supported by the Army Research Office grant W911NF-24-1-0058.

\pdfbookmark[1]{References}{ref}
\LastPageEnding


\begin{thebibliography}{99}
\footnotesize\itemsep=0pt

\bibitem{andreotti1957generalites}
Andreotti A., G\'en\'eralit\'es sur les cat\'egories ab\'eliennes (suite),
 \textit{S\'eminaire A. Grothendieck} \textbf{1} (1957), 1--16.

\bibitem{asherovaProjectionOperatorsSimple1973}
Asherova R.M., Smirnov J.F., Tolstoy V.N., Projection operators for simple
 {L}ie groups.~{II}. {G}eneral scheme for constructing lowering operators.
 {T}he groups~{${\rm SU}(n)$}, \href{https://doi.org/10.1007/BF01028268}{\textit{Theoret. and Math. Phys.}} \textbf{15}
 (1973), 392--401.

\bibitem{ashtonRmatrixMickelssonAlgebras2015}
Ashton T., Mudrov A., R-matrix and {M}ickelsson algebras for orthosymplectic
 quantum groups, \href{https://doi.org/10.1063/1.4927582}{\textit{J.~Math. Phys.}} \textbf{56} (2015), 081701, 8~pages,
 \href{https://arxiv.org/abs/1410.6493}{arXiv:1410.6493}.

\bibitem{bavula1992generalized}
Bavula V.V., Generalized {W}eyl algebras and their representations,
 \textit{St.~Petersburg Math.~J.} \textbf{4} (1993), 71--92.

\bibitem{benkartAlgebras1998}
Benkart G., Roby T., Down-up algebras, \href{https://doi.org/10.1006/jabr.1998.7511}{\textit{J.~Algebra}} \textbf{209} (1998),
 305--344, \href{https://arxiv.org/abs/math.RT/9803159}{arXiv:math.RT/9803159}.

\bibitem{bohmHopfAlgebroids2009}
B\"ohm G., Hopf algebroids, in Handbook of {A}lgebra, \href{https://doi.org/10.1016/S1570-7954(08)00205-2}{\textit{Handb. Algebr.}},
 Vol.~6, Elsevier, Amsterdam, 2009, 173--235, \href{https://arxiv.org/abs/0805.3806}{arXiv:0805.3806}.

\bibitem{conleyBoundedLengthRepresentations2001}
Conley C.H., Bounded length~3 representations of the {V}irasoro {L}ie algebra,
 \href{https://doi.org/10.1155/S1073792801000320}{\textit{Internat. Math. Res. Notices}} (2001), 609--628.

\bibitem{conleyConformalSymbolsAction2009}
Conley C.H., Conformal symbols and the action of contact vector fields over the
 superline, \href{https://doi.org/10.1515/CRELLE.2009.062}{\textit{J.~Reine Angew. Math.}} \textbf{633} (2009), 115--163,
 \href{https://arxiv.org/abs/0712.1780}{arXiv:0712.1780}.

\bibitem{debieHarmonicTransvectorAlgebra2017}
De~Bie H., Eelbode D., Roels M., The harmonic transvector algebra in two vector
 variables, \href{https://doi.org/10.1016/j.jalgebra.2016.10.039}{\textit{J.~Algebra}} \textbf{473} (2017), 247--282,
 \href{https://arxiv.org/abs/1510.06566}{arXiv:1510.06566}.

\bibitem{dixmierEnvelopingAlgebras1996}
Dixmier J., Enveloping algebras, \textit{Grad. Stud. Math.}, Vol.~11, \href{https://doi.org/10.1090/gsm/011}{American
 Mathematical Society}, Providence, RI, 1996.

\bibitem{gaddisWeylAlgebraIts2023}
Gaddis J., The {W}eyl algebra and its friends: {A} survey, \href{https://arxiv.org/abs/2305.01609}{arXiv:2305.01609}.

\bibitem{hartwigNoncommutativeFiberProducts2018}
Hartwig J.T., Noncommutative fiber products and lattice models,
 \href{https://doi.org/10.1016/j.jalgebra.2018.04.015}{\textit{J.~Algebra}} \textbf{508} (2018), 35--80, \href{https://arxiv.org/abs/1612.08125}{arXiv:1612.08125}.

\bibitem{hartwigDiagonalReductionAlgebra2022c}
Hartwig J.T., Williams II D.A., Diagonal reduction algebra
 for~{$\mathfrak{osp}(1|2)$}, \href{https://doi.org/10.1134/S0040577922020015}{\textit{Theoret. and Math. Phys.}} \textbf{210}
 (2022), 155--171, \href{https://arxiv.org/abs/2106.04380}{arXiv:2106.04380}.

\bibitem{hartwigGhostCenterRepresentations2023}
Hartwig J.T., Williams II D.A., Ghost center and representations of the
 diagonal reduction algebra of~{$\mathfrak{osp}(1|2)$}, \href{https://doi.org/10.1016/j.geomphys.2023.104788}{\textit{J.~Geom.
 Phys.}} \textbf{187} (2023), 104788, 20~pages, \href{https://arxiv.org/abs/2203.08068}{arXiv:2203.08068}.

\bibitem{havlicekCanonicalRealizationsLie1975}
Havl\'{\i}\v{c}ek M., Lassner W., Canonical realizations of the {L}ie
 algebras~{$\mathfrak{gl}(n,\mathbb{R})$} and~{$\mathfrak{sl}(n,\mathbb{R})$}.
 {I}. {F}ormulae and classification, \href{https://doi.org/10.1016/0034-4877(75)90081-6}{\textit{Rep. Math. Phys.}}
 \textbf{8} (1975), 391--399.

\bibitem{havlicekCanonicalRealizationsLie1976}
Havl\'{\i}\v{c}ek M., Lassner W., Canonical realizations of the {L}ie
 algebra~{$\mathfrak{sp}(2n, \mathbb{R})$}, \href{https://doi.org/10.1007/BF01807449}{\textit{Internat.~J. Theoret.
 Phys.}} \textbf{15} (1976), 867--876.

\bibitem{herlemontDifferentialCalculusMathbf2017}
Herlemont B., Differential calculus on $\mathbf{h}$-deformed spaces, Ph.D.~Thesis, {A}ix-Marseille {U}niversit\'e, 2017, \href{https://arxiv.org/abs/1802.01357}{arXiv:1802.01357}.

\bibitem{herlemontDifferentialCalculusHDeformed2017}
Herlemont B., Ogievetsky O., Differential calculus on {$\bf h$}-deformed
 spaces, \href{https://doi.org/10.3842/SIGMA.2017.082}{\textit{SIGMA}} \textbf{13} (2017), 082, 28~pages,
 \href{https://arxiv.org/abs/1704.05330}{arXiv:1704.05330}.

\bibitem{itoClassificationConvexOrders2001}
Ito K., The classification of convex orders on affine root systems,
\textit{Comm. Algebra} \textbf{29} (2001), 5605--5630,
 \href{https://arxiv.org/abs/math.QA/9912020}{arXiv:math.QA/9912020}.

\bibitem{jacobsonNonCommutativePolynomialsCyclic1934}
Jacobson N., Non-commutative polynomials and cyclic algebras, \href{https://doi.org/10.2307/1968424}{\textit{Ann. of
 Math.}} \textbf{35} (1934), 197--208.

\bibitem{kacClassificationSimpleLie1979}
Kac V.G., Classification of simple {L}ie superalgebras, \href{https://doi.org/10.1007/BF01075610}{\textit{Funct. Anal.
 Appl.}} \textbf{9} (1979), 263--265.

\bibitem{khoroshkinMickelssonAlgebrasZhelobenko2008}
Khoroshkin S., Ogievetsky O., Mickelsson algebras and {Z}helobenko operators,
 \href{https://doi.org/10.1016/j.jalgebra.2007.04.020}{\textit{J.~Algebra}} \textbf{319} (2008), 2113--2165,
 \href{https://arxiv.org/abs/math.QA/0606259}{arXiv:math.QA/0606259}.


\bibitem{lowdinAngularMomentumWavefunctions1964}
L\"owdin P.-O., Angular momentum wavefunctions constructed by projector
 operators, \href{https://doi.org/10.1103/RevModPhys.36.966}{\textit{Rev. Modern Phys.}} \textbf{36} (1964), 966--976.

\bibitem{mazorchukAssociativeAlgebrasRelated2003}
Mazorchuk V., Ponomarenko M., Turowska L., Some associative algebras related
 to~{$U(\mathfrak g)$} and twisted generalized {W}eyl algebras, \href{https://doi.org/10.7146/math.scand.a-14392}{\textit{Math.
 Scand.}} \textbf{92} (2003), 5--30.

\bibitem{mickelssonStepAlgebrasSemisimple1973}
Mickelsson J., Step algebras of semi-simple subalgebras of {L}ie algebras,
 \href{https://doi.org/10.1016/0034-4877(73)90006-2}{\textit{Rep. Mathematical Phys.}} \textbf{4} (1973), 307--318.

\bibitem{herlemontRingsHdeformedDifferential2017}
Ogievetsky O.V., Herlemont B., Rings of~{$\bf h$}-deformed differential
 operators, \href{https://doi.org/10.1134/S0040577917080104}{\textit{Theoret. and Math. Phys.}} \textbf{192} (2017), 1218--1229,
 \href{https://arxiv.org/abs/1612.08001}{arXiv:1612.08001}.

\bibitem{khoroshkinDiagonalReductionAlgebras2010}
 Ogievetsky O., Khoroshkin S., Diagonal reduction algebras of $\mathfrak{gl}$
 type, \href{https://doi.org/10.1007/s10688-010-0023-0}{\textit{Funct. Anal. Appl.}} \textbf{44} (2010), 182--198,
 \href{https://arxiv.org/abs/0912.4055}{arXiv:0912.4055}.

\bibitem{sevostyanovGeometricMeaningZhelobenko2013}
Sevostyanov A., The geometric meaning of {Z}helobenko operators,
 \href{https://doi.org/10.1007/s00031-013-9234-9}{\textit{Transform. Groups}} \textbf{18} (2013), 865--875, \href{https://arxiv.org/abs/1206.3947}{arXiv:1206.3947}.

\bibitem{tolstoy2005fortieth}
Tolstoy V.N., Fortieth anniversary of extremal projector method for {L}ie
 symmetries, in Noncommutative {G}eometry and {R}epresentation {T}heory in
 {M}athematical {P}hysics, \textit{Contemp. Math.}, Vol. 391, \href{https://doi.org/10.1090/conm/391/07342}{American
 Mathematical Society, Providence}, RI, 2005, 371--384,
 \href{https://arxiv.org/abs/math-ph/0412087}{arXiv:math-ph/0412087}.

\bibitem{van1976harisch}
van~den Hombergh A., Harish-Chandra modules and representations of step
 algebra, Ph.D.~Thesis, {K}atolic University of {N}ijmegen, 1976, available at
 \url{http://hdl.handle.net/2066/147527}.

\bibitem{vandenhomberghNoteMickelssonStep1975}
van~den Hombergh A., A note on {M}ickelsson's step algebra, \href{https://doi.org/10.1016/1385-7258(75)90013-X}{\textit{Indag.
 Math.}} \textbf{78} (1975), 42--47.

\bibitem{williamsii2020BasesInfinitedimensionalRepresentations}
Williams II D.A., Bases of infinite-dimensional representations of
 orthosymplectic {L}ie superalgebras, Ph.D.~Thesis, {T}he University of
 {T}exas at {A}rlington, 2020, available at
 \url{http://hdl.handle.net/10106/29148}.

\bibitem{williamsii2024actionosp12npolytensor}
Williams II D.A., Action of~$\mathfrak{osp}(1|2n)$ on polynomials
 tensor~$\mathbb{C}^{0|2n}$, \href{https://arxiv.org/abs/2408.12324}{arXiv:2408.12324}.

\bibitem{zhelobenko1985gelfand}
Zhelobenko D.P., On {G}elfand--{Z}etlin bases for classical {L}ie algebras, in
 Representations of {L}ie {G}roups and {L}ie {A}lgebras ({B}udapest, 1971),
 Akad\'emiai Kiad\'o, Budapest, 1985, 79--106.

\bibitem{zhelobenkoExtremalProjectorsGeneralized1989}
Zhelobenko D.P., Extremal projectors and generalized {M}ickelsson algebras on
 reductive {L}ie algebras, \href{https://doi.org/10.1070/IM1989v033n01ABEH000815}{\textit{Math. USSR Izv.}} \textbf{33} (1989),
 85--100.

\bibitem{zhelobenkoHypersymmetriesExtremalEquations1997}
Zhelobenko D.P., Hypersymmetries of extremal equations, \textit{Nova~J. Theor.
 Phys.} \textbf{5} (1997), 243--258.

\bibitem{zhelobenkoPrincipalStructuresMethods2006}
Zhelobenko D.P., Principal structures and methods of representation theory,
 \textit{Transl. Math. Monogr.}, Vol.~228, \href{https://doi.org/10.1090/mmono/228}{American Mathematical Society},
 Providence, RI, 2006.

\end{thebibliography}
\end{document}